\definecolor{webgreen}{rgb}{0,.5,0}
\definecolor{webbrown}{rgb}{.6,0,0}
\theoremstyle{plain}
\newtheorem{theorem}{Theorem}
\newtheorem{lemma}[theorem]{Lemma}
\newtheorem{corollary}[theorem]{Corollary}
\theoremstyle{definition}
\newtheorem{conjecture}[theorem]{Conjecture}
\theoremstyle{remark}
\newtheorem{remark}[theorem]{Remark}
\newcommand{\ZZ}{\mathbb{Z}}
\newcommand{\QQ}{\mathbb{Q}}
\newcommand{\DD}{\mathbb{D}}
\newcommand{\DB}{\mathfrak{D}}
\newcommand{\DN}{\mathbf{D}}
\newcommand{\DS}{\mathcal{D}}
\newcommand{\DDp}{\mathbb{D}^+}
\newcommand{\DDm}{\mathbb{D}^-}
\newcommand{\DDt}{\mathbb{D}^\top}
\newcommand{\DDb}{\mathbb{D}^\bot}
\newcommand{\DDc}{\mathbb{D}^{\top^{\scriptstyle\star}}}
\newcommand{\BN}{\mathbf{B}}
\newcommand{\MRS}{\mathcal{R}}
\newcommand{\MRo}{\overline{\mathcal{R}}}
\newcommand{\MS}{\mathcal{S}}
\newcommand{\MSo}{\overline{\mathcal{S}}}
\DeclareMathOperator{\denom}{denom}
\DeclareMathOperator{\lcm}{lcm}
\DeclareMathOperator{\rad}{rad}
\newcommand{\mids}{\,\mid\,}
\newcommand{\nmids}{\,\nmid\,}
\newcommand{\set}[1]{\{#1\}}
\newcommand{\intpart}[1]{\lfloor{#1}\rfloor}
\newcommand{\seqnum}[1]{\href{https://oeis.org/#1}{\rm \underline{#1}}}
\setlist[enumerate]{label=(\roman*),font=\rm,leftmargin=1.2cm,itemsep=1pt,
parsep=1pt,before={\parskip=1pt}}
\begin{document}

\title[On the finiteness of Bernoulli polynomials]
{On the finiteness of Bernoulli polynomials whose
derivative has only integral coefficients}
\author{Bernd C. Kellner}
\address{G\"ottingen, Germany}
\email{bk@bernoulli.org}
\subjclass[2020]{11B83 (Primary), 11B68 (Secondary)}
\keywords{Bernoulli polynomial, derivative, integral coefficient, denominator,
decomposition, product of primes, sum of base-$p$ digits}

\begin{abstract}
It is well known that the Bernoulli polynomials $\mathbf{B}_n(x)$ have nonintegral
coefficients for $n \geq 1$. However, ten cases are known so far in which the
derivative $\mathbf{B}'_n(x)$ has only integral coefficients.
One may assume that the number of those derivatives is finite.
We can link this conjecture to a recent conjecture about the properties of a
product of primes satisfying certain $p$-adic conditions.
Using a related result of Bordell{\`e}s, Luca, Moree, and Shparlinski,
we then show that the number of those derivatives is indeed finite.
Furthermore, we derive other characterizations of the primary conjecture.
Subsequently, we extend the results to higher derivatives of the Bernoulli
polynomials. This provides a product formula for these denominators,
and we show similar finiteness results.
\end{abstract}

\maketitle


\section{Introduction}

The Bernoulli polynomials $\BN_n(x)$ are defined by the exponential generating
function
\begin{equation} \label{eq:bpgf}
  \frac{t e^{xt}}{e^t - 1} = \sum_{n=0}^{\infty} \BN_n(x) \frac{t^n}{n!}
  \quad (|t| < 2\pi)
\end{equation}
and explicitly given by the formula
\begin{equation} \label{eq:bpdef}
  \BN_n(x) = \sum_{k=0}^{n} \binom{n}{k} \BN_{n-k} \, x^k \quad (n \geq 0),
\end{equation}
where ${\BN_n = \BN_n(0) \in \QQ}$ is the $n$th Bernoulli number.
It easily follows from \eqref{eq:bpgf} that \mbox{$\BN_n=0$} for odd ${n \geq 3}$.
For more properties see Cohen~\cite[Chapter~9]{Cohen:2007}.
The Bernoulli polynomials ${\BN_n(x) \in \QQ[x]}$ are
Appell polynomials \cite{Appell:1880}. Therefore, they satisfy the rule
\begin{equation} \label{eq:bpderiv}
  \BN'_n(x) = n \BN_{n-1}(x) \quad (n \geq 1).
\end{equation}
While ${\BN_n(x) \notin \ZZ[x]}$ for ${n \geq 1}$, which is equivalent to
${\denom(\BN_n(x)) > 1}$ for ${n \geq 1}$
(the denominators are discussed in the next section), it turns out that
\[
  \BN'_n(x) \in \ZZ[x] \quad \text{for }
  n \in \MS := \set{1, 2, 4, 6, 10, 12, 28, 30, 36, 60}.
\]

The elements of $\MS$, viewed as an ordered sequence, equal the finite
sequence~\seqnum{A094960} in the {\it On-Line Encyclopedia of Integer Sequences} 
(OEIS)~\cite{OEIS} as published in 2004. So far, no further terms have been found.
It is mainly assumed that sequence \seqnum{A094960} is indeed
finite and completely determined by $\MS$. Note that we implicitly omit 
the trivial case for $n=0$, since $\BN_0(x) = \BN_0 = 1$. Define
\[
  \MSo := \set{n \geq 1 : \BN'_n(x) \in \ZZ[x]}.
\]

For our purposes, we split the conjecture into two parts as follows.

\begin{conjecture} \label{conj:main}
We have the following statements.
\begin{enumerate}
\item The set $\MSo$ is finite.
\item We have $\MSo = \MS$.
\end{enumerate}
\end{conjecture}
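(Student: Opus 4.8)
The plan is to translate the polynomial condition into a divisibility condition on $n$, compute the relevant denominator exactly, and then feed the resulting product-of-primes constraint into the analytic machinery. First I would apply the Appell relation~\eqref{eq:bpderiv}: since $\BN'_n(x) = n\,\BN_{n-1}(x)$, we have $\BN'_n(x) \in \ZZ[x]$ precisely when $n\,\BN_{n-1}(x) \in \ZZ[x]$, that is, when $\denom(\BN_{n-1}(x)) \mids n$; hence $\MSo = \set{n \geq 1 : \denom(\BN_{n-1}(x)) \mids n}$, and the crux becomes determining $\denom(\BN_m(x))$ for $m = n-1$. Using~\eqref{eq:bpdef} each coefficient is $\binom{m}{k}\BN_{m-k}$, and by the von Staudt--Clausen theorem $v_p(\BN_{m-k}) \geq -1$ always, so every coefficient has $v_p \geq -1$ and $\denom(\BN_m(x))$ is squarefree. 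For odd $p$ a coefficient attains valuation $-1$ exactly when $(p-1) \mids (m-k)$ with $m-k>0$ and $p \nmids \binom{m}{k}$; combining this with Kummer's theorem I would prove, writing $s_p$ for the base-$p$ digit sum,
\[
  p \mids \denom(\BN_m(x)) \iff s_p(m) \geq p-1
\]
(with $2$ dividing whenever $m \geq 1$). The point is that a pole at $p$ requires a positive multiple $j$ of $p-1$ that is digitwise dominated by $m$; since the digitwise parts of $m$ realize every digit sum up to $s_p(m)$ and every such $j$ satisfies $s_p(j) \equiv 0 \pmod{p-1}$, such a $j$ exists exactly when $s_p(m) \geq p-1$.

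Next I would reformulate. As the denominator is squarefree, $\denom(\BN_{n-1}(x)) \mids n$ holds if and only if every prime $p$ with $s_p(n-1) \geq p-1$ divides $n$; equivalently, every prime $p$ with $p \nmids n$ satisfies $s_p(n-1) \leq p-2$. This exhibits $\MSo$ exactly as the set of $n$ for which a prescribed product of primes --- those meeting the digit-sum condition relative to $n-1$ --- divides $n$, which is the desired link to the primary conjecture about products of primes satisfying $p$-adic conditions.

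The finiteness in part~(i) is then a counting statement. Heuristically, $n$ can absorb at most one prime factor above $\sqrt{n}$, whereas for primes $p \in (\sqrt{n}, n)$ the requirement $s_p(n-1) \leq p-2$ --- reading $n-1 = ap+b$ as $a+b \leq p-2$ --- is a stringent simultaneous restriction as $p$ varies, which should be incompatible with $p \nmids n$ for all large $n$. To make this rigorous I would invoke the result of Bordell\`es, Luca, Moree, and Shparlinski, which bounds the count of $n$ obeying exactly such product-of-primes divisibility constraints and thereby forces $\MSo$ to be finite.

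The main obstacle is part~(ii), the exact equality $\MSo = \MS$. Proving it requires an \emph{effective} upper bound $N_0$ on the elements of $\MSo$, after which one merely verifies by computer that the digit-sum conditions single out precisely $\MS$ on $[1, N_0]$. Since the analytic input furnishing finiteness is likely ineffective, producing such an $N_0$ --- and hence ruling out an eleventh term --- is the hard part, and I expect part~(ii) to remain conjectural even after part~(i) is settled.
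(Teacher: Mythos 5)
Your proposal follows essentially the same route as the paper, and its logical structure is sound; note first that the statement is a conjecture, and both you and the paper actually resolve only part~(i), leaving part~(ii) open. Your chain --- $\BN'_n(x)\in\ZZ[x] \iff \denom(\BN_{n-1}(x)) \mids n$, then a base-$p$ digit-sum criterion for the denominator, then the Bordell{\`e}s--Luca--Moree--Shparlinski asymptotic to force failure for all large $n$ --- is exactly the paper's path through Theorem~\ref{thm:deriv} and Theorem~\ref{thm:main}. Two points of comparison. First, your criterion $p \mids \denom(\BN_m(x)) \iff s_p(m) \geq p-1$ (odd $p$), derived from von Staudt--Clausen plus Kummer, is correct and is equivalent to the paper's formula $\DB_m = \lcm(\DD_{m+1}, \rad(m+1))$ with $\DD_{m+1} = \prod_{s_p(m+1) \geq p} p$: a carry analysis shows $s_p(m) \geq p-1$ holds iff $s_p(m+1) \geq p$ or $p \mids m+1$. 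So your self-contained derivation simply replaces the paper's citations to earlier work of Kellner and Kellner--Sondow; this is a presentational, not substantive, difference. Second, your finiteness mechanism is a pigeonhole: two primes $p_1, p_2 > \sqrt{n}$ meeting the digit condition cannot both divide $n$ since $p_1 p_2 > n$, so you need $\omega(\DDp_n) \geq 2$ for large $n$. The paper instead proves (Lemma~\ref{lem:dddiv}) that any prime $p > \sqrt{n}$ with $s_p(n) \geq p$ automatically satisfies $p \nmids n$, so $\omega(\DDp_n) \geq 1$ already suffices; both mechanisms work because the BLMS result gives $\omega(\DDp_n) \sim 2\sqrt{n}/\log n \to \infty$. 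One correction you should make: BLMS do not ``bound the count of $n$ obeying such product-of-primes divisibility constraints.'' Their Corollary~1.6 establishes, for each sufficiently large $n$, the asymptotic for $\omega(\DDp_n)$, i.e., the number of primes $p > \sqrt{n}$ with $s_p(n) \geq p$ --- which is precisely the input your pigeonhole needs, so the mischaracterization is harmless to the argument but should be fixed in the write-up. Finally, your assessment of part~(ii) --- that the ineffectivity of the BLMS bound blocks any proof of $\MSo = \MS$ beyond finite verification --- coincides with the paper's own conclusion that (ii) remains an open problem.
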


We link the above conjecture to a more recent conjecture of the author~\cite{Kellner:2017}
in a $p$-adic context, where $p$ always denotes a prime.
The function $s_p(n)$ gives the sum of the base-$p$ digits of an integer $n \geq 0$.
Let $\omega(n)$ be the additive function that counts the distinct prime divisors of $n$.
As usual, an empty product is defined to be $1$, and an empty sum is defined to be $0$.
We consider the product
\begin{equation} \label{eq:ddp}
  \DDp_n := \prod_{\substack{p \, > \, \sqrt{n}\\ s_p(n) \, \geq \, p}} p \quad (n \geq 1),
\end{equation}
where $p$ runs over the primes. Note that the above product is always finite,
since $s_p(n) = n$ for $p > n$. By Kellner \cite[Theorem~4]{Kellner:2017},
we have a further relation that
\begin{equation} \label{eq:ddpo}
  \omega(\DDp_n) = \sum_{\substack{ p \, > \, \sqrt{n}\\
  \intpart{\frac{n-1}{p-1}} \, > \, \intpart{\frac{n}{p}}}} \!\! 1
  < \sqrt{n} \quad (n \geq 1).
\end{equation}

We shall clarify the notation of $\DDp_n$ in a more general setting in the next section.

\begin{conjecture}[Kellner~{\cite[Conjectures~1, 2]{Kellner:2017}}] \label{conj:kel}
We have the following statements.
\begin{enumerate}
\item We have $\DDp_n > 1$, respectively, $\omega(\DDp_n) > 0$ for $n > 192$.
\item There exists a constant $\kappa > 1$ such that
\begin{equation} \label{eq:ddpo2}
  \omega(\DDp_n) \, \sim \, \kappa \, \frac{\sqrt{n}}{\log n}
  \quad \text{as $n \to \infty$.}
\end{equation}
\end{enumerate}
\end{conjecture}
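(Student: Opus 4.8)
Both parts reduce to a prime-counting problem once the digit condition defining $\DDp_n$ is unwound. Fix $n$ and a prime $p > \sqrt{n}$, so that $n < p^2$ and, in base $p$, $n = ap + b$ with $a = \intpart{n/p}$, $0 \le a,b < p$, and $s_p(n) = a+b$. Writing $b = n-ap$, the condition $s_p(n) \ge p$ becomes $a + n - ap \ge p$, i.e.
\begin{equation*}
  \frac{n}{a+1} < p \le \frac{n+a}{a+1} =: \text{right endpoint of } I_a,
\end{equation*}
where the left inequality merely restates $a = \intpart{n/p}$. The interval $I_a = (\tfrac{n}{a+1}, \tfrac{n+a}{a+1}]$ has length $a/(a+1) < 1$, hence contains at most one integer. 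Thus $\omega(\DDp_n)$ equals the number of $a$ in the range $1 \le a < \sqrt{n}$ (the regime forcing $p > \sqrt n$) for which the unique integer candidate $p_a \in I_a$ both exists and is prime. This is exactly the count recorded in \eqref{eq:ddpo} under $a = \intpart{n/p}$, since $\intpart{(n-1)/(p-1)} > \intpart{n/p}$ is precisely the assertion $p \in I_a$.

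For part (i) the plan is to split at a moderate threshold: verify $\DDp_n > 1$ by direct computation for $n$ up to some $N_0$ in order to pin down the explicit bound $192$, and for large $n$ to exhibit at least one admissible $a$. The natural first attempt uses the small values $a = 1, 2, 3, \dots$, where $p_a \approx n/(a+1)$ lies in a controlled range, and tries to guarantee that at least one candidate is present in its interval and prime. Since no single $I_a$ is forced to contain a prime, one must combine a block of consecutive $a$; the delicate point is that whether the candidate integer even falls inside $I_a$ depends on the fractional part of $n/(a+1)$, so a covering or averaging argument over such a block is needed in place of any pointwise estimate.

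For part (ii) I would first fix the asymptotic heuristically and then seek a rigorous count. Modelling $p_a$ as a ``random'' integer of size $n/a$, it is prime with density $\sim 1/\log(n/a)$, while $I_a$ contains an integer with density $a/(a+1)$; summing over $1 \le a < \sqrt{n}$ and substituting $a = t\sqrt{n}$ gives
\begin{equation*}
  \omega(\DDp_n) \approx \sum_{1 \le a < \sqrt{n}} \frac{a}{a+1}\,\frac{1}{\log(n/a)}
  \sim \kappa\,\frac{\sqrt{n}}{\log n},
\end{equation*}
with $\kappa$ emerging as a limiting integral in $t$. To make this rigorous one cannot count primes interval-by-interval, as each $I_a$ is shorter than $1$; instead I would reorganize the double count by summing over the primes $p \in (\sqrt{n}, n]$ and counting the admissible $a$ attached to each, thereby recasting the problem as the distribution of primes among the values $\intpart{n/k}$.

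The main obstacle is precisely this last reduction. Controlling how the primes distribute along the sequence $\intpart{n/k}$, $k = 1, 2, \dots$, with an error term sharp enough to isolate the constant $\kappa$, is exactly the type of question treated by Bordell{\`e}s, Luca, Moree, and Shparlinski, and their result is what one should invoke to upgrade the heuristic density into a theorem. For the weaker finiteness statements actually pursued here, it suffices to extract from such input a lower bound forcing $\omega(\DDp_n) > 0$ for all large $n$, which is far more accessible than the full asymptotic; determining the precise value of $\kappa$ in (ii) and the sharp threshold $192$ in (i) are the hardest pieces, and are the reason the statement remains conjectural.
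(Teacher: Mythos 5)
This statement is a \emph{conjecture} in the paper: there is no proof of it there to compare against. It is attributed to Kellner's 2017 paper, and the only part that has been settled is (ii), which Bordell\`es, Luca, Moree, and Shparlinski proved with the explicit constant $\kappa = 2$ for all sufficiently large $n$; the paper simply cites this as \cite[Corollary~1.6]{BLMS:2018} and uses it to deduce Conjecture~\ref{conj:main}(i). That said, your reformulation is correct and is in fact the known identity \eqref{eq:ddpo}: for $p > \sqrt{n}$ one writes $n = ap + b$ in base $p$, and $s_p(n) \geq p$ holds exactly when $p$ lies in the interval $I_a = \bigl(\tfrac{n}{a+1}, \tfrac{n+a}{a+1}\bigr]$ of length $< 1$, which is equivalent to $\intpart{\frac{n-1}{p-1}} > \intpart{\frac{n}{p}}$; your density heuristic also correctly predicts $\kappa = 2$. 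But for part (ii) your plan ultimately amounts to invoking the BLMS theorem, which is legitimate (it is what the paper's corollary does) yet is a citation rather than a proof sketch that could be completed independently: the analytic core --- controlling the distribution of primes along the values $\intpart{n/k}$ with an error term sharp enough to extract the constant --- is precisely their contribution.

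The concrete gap is in part (i). Your strategy of verifying $\DDp_n > 1$ computationally up to some threshold and then using a lower bound for all larger $n$ cannot be completed, because the BLMS asymptotic is \emph{ineffective}: as the paper itself notes, their methods ``do not lead to an explicit or computable bound $n_0$,'' so there is no computable point at which the asymptotic takes over, and no finite computation can bridge to it. Your alternative idea --- a covering or averaging argument over a block of consecutive $a$ forcing at least one prime candidate --- is exactly the missing idea that nobody has: since each $I_a$ has length $< 1$, one would need to show that among roughly $\sqrt{n}$ specific integers at least one is prime, for \emph{every} $n > 192$, a pointwise statement far beyond current methods. This is precisely why Conjecture~\ref{conj:kel}(i), and with it Conjecture~\ref{conj:main}(ii) via Theorem~\ref{thm:main}, remains open. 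You are candid that the statement stays conjectural, but to be explicit: what you have is a correct reformulation plus a citation covering (ii) for large $n$, and no argument for (i).
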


At first glance, Conjectures~\ref{conj:main} and \ref{conj:kel} seem to be incompatible.
However, we can establish the following connection.

\begin{theorem} \label{thm:main}
Conjecture~\ref{conj:kel}(i) and (ii) imply Conjecture~\ref{conj:main}(ii) and (i),
respectively.
\end{theorem}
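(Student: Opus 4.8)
The plan is to funnel both implications through a single arithmetic fact linking $\DDp_n$ to the denominator of $\BN'_n(x)$. Since $n \in \MSo$ means exactly $\denom(\BN'_n(x)) = 1$, I would first establish the key divisibility $\DDp_n \mid \denom(\BN'_n(x))$; everything else is then soft. To get it, differentiate \eqref{eq:bpdef} to write $\BN'_n(x) = \sum_{k=1}^{n} k\binom{n}{k}\BN_{n-k}\,x^{k-1}$ and fix a prime $p > \sqrt{n}$. A coefficient $k\binom{n}{k}\BN_{n-k}$ can fail to be $p$-integral only when $\BN_{n-k}$ itself contributes valuation $-1$, which by the von Staudt--Clausen theorem forces $(p-1)\mid(n-k)$ with $n-k$ even; in that case non-integrality additionally requires $p\nmid k$ and $p\nmid\binom{n}{k}$, the latter being equivalent, via Kummer's theorem, to the carry-free condition $s_p(k)+s_p(n-k)=s_p(n)$. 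Writing $n=a_1p+a_0$ in base $p$ (only two digits, since $p>\sqrt n$) and running the index $m=n-k$ through the multiples $j(p-1)$, one checks that a carry-free index $k$ with $p\nmid k$ and $m$ even can be chosen precisely when $a_0+a_1=s_p(n)\geq p$. Hence for $p>\sqrt n$ one has $p\mid\denom(\BN'_n(x))$ iff $s_p(n)\geq p$, which is exactly the defining condition for the factor $p$ in $\DDp_n$; as $\DDp_n$ is squarefree, this yields $\DDp_n\mid\denom(\BN'_n(x))$.

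The immediate consequence is the contrapositive statement $(\star)$: if $\DDp_n>1$ then $\denom(\BN'_n(x))>1$, so $\BN'_n(x)\notin\ZZ[x]$ and $n\notin\MSo$; equivalently, $n\in\MSo$ forces $\DDp_n=1$. From here the two implications are routine. Assuming Conjecture~\ref{conj:kel}(i), namely $\DDp_n>1$ for all $n>192$, statement $(\star)$ gives $\MSo\subseteq\{1,\dots,192\}$. Since $\MS\subseteq\MSo$ holds unconditionally, it remains only to verify by a finite computation that no integer $n\leq 192$ outside $\MS$ satisfies $\BN'_n(x)\in\ZZ[x]$; this yields $\MSo=\MS$, which is Conjecture~\ref{conj:main}(ii).

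For the other implication, assume Conjecture~\ref{conj:kel}(ii). Because $\kappa>1$ and $\sqrt{n}/\log n\to\infty$, the asymptotic \eqref{eq:ddpo2} forces $\omega(\DDp_n)\to\infty$, so $\DDp_n>1$ for all sufficiently large $n$, say $n\geq N_0$. Applying $(\star)$ again gives $\MSo\subseteq\{1,\dots,N_0-1\}$, so $\MSo$ is finite, which is Conjecture~\ref{conj:main}(i). Note that here only $\omega(\DDp_n)>0$ eventually is used, not the precise constant $\kappa$; this is why the asymptotic, lacking an explicit threshold, yields finiteness but not the exact set, matching the stated pairing of the two parts.

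The one genuinely substantial step is the key divisibility $\DDp_n\mid\denom(\BN'_n(x))$ of the first paragraph --- specifically, showing that a large prime $p$ with $s_p(n)\geq p$ really does force some coefficient of $\BN'_n(x)$ to be non-integral, rather than merely being consistent with it. The delicate points are the simultaneous control of $v_p(\BN_{n-k})$ (von Staudt--Clausen), of $v_p\binom{n}{k}$ (Kummer's carry count), and of the factor $v_p(k)$, together with checking that the index $m=n-k$ can be chosen even, positive, and with $p\nmid(n-m)$ exactly in the regime $s_p(n)\geq p$, including the boundary case $s_p(n)=p$. This is the analysis I expect to occupy the next section, and it is where the restriction $p>\sqrt{n}$, which limits $n$ to two base-$p$ digits, does the real work.
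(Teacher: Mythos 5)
Your proposal is correct, and its top-level logic coincides exactly with the paper's: everything is funneled through the single implication ``$\DDp_n > 1 \Rightarrow \BN'_n(x) \notin \ZZ[x]$'', after which Conjecture~\ref{conj:kel}(i) plus a finite verification for $n \leq 192$ gives Conjecture~\ref{conj:main}(ii), and Conjecture~\ref{conj:kel}(ii) gives eventual positivity of $\omega(\DDp_n)$ and hence Conjecture~\ref{conj:main}(i). Where you genuinely differ is in how that implication is obtained. The paper splits it into Lemma~\ref{lem:dddiv}, a two-line digit argument showing $\DDp_n \mid \DDb_n$ (a prime $p > \sqrt{n}$ with $s_p(n) = a_0 + a_1 \geq p$ must have $a_0 \neq 0$, so $p \nmid n$), and Theorem~\ref{thm:deriv}, which from $\BN'_n(x) = n\BN_{n-1}(x)$, the decomposition $\DB_{n-1} = \DDb_n \cdot \rad(n)$ of \eqref{eq:dbrel}, and coprimality of $\DDb_n$ with $n$ concludes $\BN'_n(x) \in \ZZ[x] \Leftrightarrow \DDb_n = 1$; this rests on the product formula \eqref{eq:ddprod} and Theorem~\ref{thm:triple}, both imported from earlier papers. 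You instead prove the needed divisibility $\DDp_n \mid \denom(\BN'_n(x))$ from first principles, with von Staudt--Clausen and Kummer replacing the imported denominator machinery. Your route is more self-contained (only classical ingredients), but it proves less: the paper's route yields the exact identity $\denom(\BN'_n(x)) = \DDb_n$ and the consequence that $n+1$ is prime for $n \in \MSo$, which in particular trims the finite check below $192$ to indices $n$ with $n+1$ prime.

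For the record, the step you defer (``one checks\ldots'') does go through, and only the ``if'' direction of your ``precisely when'' is needed. Write $n = a_1 p + a_0$ with $a_0 + a_1 \geq p$, so $a_0, a_1 \geq 1$. If $a_0 + a_1 > p$, pick $m = j(p-1)$ with $p - a_0 < j \leq a_1$: the base-$p$ digits $(p-j, j-1)$ of $m$ are componentwise at most $(a_0, a_1)$, so $p \nmid \binom{n}{n-m}$ by Kummer; moreover $n - m \equiv a_0 + j \not\equiv 0 \pmod{p}$, and $m$ is even since $p-1$ is even; hence the coefficient $(n-m)\binom{n}{n-m}\BN_m$ has $p$-valuation $-1$. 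If $a_0 + a_1 = p$ exactly, this window closes (the only carry-free $j$ forces $n - m = p$), but then $n \equiv a_0 + a_1 = p \pmod{2}$ is odd, so $m = n-1 = (a_1+1)(p-1)$ is even and the $k=1$ coefficient $n\BN_{n-1}$ already has valuation $-1$ because $p \nmid n$. The lone case $p = 2$ occurs only for $n = 3$, where $\BN'_3(x) = 3x^2 - 3x + \tfrac{1}{2}$. So your outline is sound as stated, with the boundary case resolved by exactly the parity observation it requires.
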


Meanwhile, Conjecture~\ref{conj:kel}(ii) with $\kappa=2$ was proven
by Bordell{\`e}s et al.~\cite{BLMS:2018} for sufficiently large $n > n_0$.
This result was achieved by exploiting \eqref{eq:ddpo},
since the condition $s_p(n) \geq p$ as in~\eqref{eq:ddp} is replaced by
$\intpart{\frac{n-1}{p-1}} > \intpart{\frac{n}{p}}$ in~\eqref{eq:ddpo},
which enabled them to use powerful analytic tools.
Unfortunately, their methods do not lead to an explicit or computable bound $n_0$.
Using their results, we arrive at the following corollary.

\begin{corollary}[Bordell{\`e}s, Luca, Moree, and Shparlinski~{\cite[Corollary~1.6]{BLMS:2018}}]
Conjecture~\ref{conj:kel}(ii) is true, so Conjecture~\ref{conj:main}(i) is true.
\end{corollary}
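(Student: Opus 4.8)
The plan is to prove the corollary in two stages: first establish Conjecture~\ref{conj:kel}(ii), and then feed it into Theorem~\ref{thm:main}. The second stage requires nothing new, since Theorem~\ref{thm:main} already asserts the implication from Conjecture~\ref{conj:kel}(ii) to Conjecture~\ref{conj:main}(i); hence the entire weight of the argument rests on proving the asymptotic law~\eqref{eq:ddpo2} with some constant $\kappa > 1$.

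To attack the asymptotic I would abandon the digit-sum formulation of $\omega(\DDp_n)$ and work instead with the equivalent counting expression in~\eqref{eq:ddpo}, namely
\[
  \omega(\DDp_n) = \#\set{p > \sqrt{n} : \intpart{\tfrac{n-1}{p-1}} > \intpart{\tfrac{n}{p}}}.
\]
The point is that the condition $\intpart{\frac{n-1}{p-1}} > \intpart{\frac{n}{p}}$ is a purely arithmetic inequality between two floor functions, to which sieve and exponential-sum techniques apply, whereas the condition $s_p(n) \geq p$ resists such tools. Since $\frac{n-1}{p-1} - \frac{n}{p} = \frac{n-p}{p(p-1)}$, the two floors differ precisely when the interval $(\frac{n}{p}, \frac{n-1}{p-1}]$ of length $\frac{n-p}{p(p-1)}$ contains an integer. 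Summing this capture probability over primes $p > \sqrt{n}$ concentrates the mass near $p \asymp \sqrt{n}$, where the interval has length close to $1$, and so predicts a count of order $\sqrt{n}/\log n$, consistent with~\eqref{eq:ddpo2}.

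Converting this heuristic into a rigorous asymptotic with the precise constant $\kappa = 2$ is the crux, and it is here that I would invoke the analytic machinery of Bordell\`es, Luca, Moree, and Shparlinski~\cite{BLMS:2018}: one must control, uniformly across dyadic ranges of $p$, how often the integer-capture event occurs, which amounts to equidistribution estimates for the fractional parts $\set{n/p}$ (or for the associated divisor-type sums). This is the main obstacle, and it is exactly what forces the conclusion to hold only for $n > n_0$ with an ineffective threshold, since the equidistribution input carries no explicit constants.

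Finally, with Conjecture~\ref{conj:kel}(ii) established --- the value $\kappa = 2$ being admissible because $2 > 1$ --- the implication in Theorem~\ref{thm:main} yields Conjecture~\ref{conj:main}(i), so the set $\MSo$ is finite, completing the proof.
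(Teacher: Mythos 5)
Your proposal is correct and takes essentially the same route as the paper: the paper likewise establishes Conjecture~\ref{conj:kel}(ii) by citing the result of Bordell\`es, Luca, Moree, and Shparlinski (with $\kappa = 2$ and an ineffective threshold $n_0$) and then applies Theorem~\ref{thm:main} to conclude Conjecture~\ref{conj:main}(i). Your heuristic for the asymptotic --- counting integer captures in the intervals $\bigl(\tfrac{n}{p}, \tfrac{n-1}{p-1}\bigr]$ of length $\tfrac{n-p}{p(p-1)}$, which indeed predicts the constant $\kappa = 2$ --- accurately sketches why the reformulation \eqref{eq:ddpo} admits analytic tools, but like the paper you ultimately rely on their Corollary~1.6 rather than reproving it, which is exactly what the attribution of the corollary intends.
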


\begin{theorem} \label{thm:main2}
We have the following statements.
\begin{enumerate}
\item If $n \in \MSo$, then $n+1$ is prime.
\item If $\MSo \neq \MS$ and $n \in \MSo \setminus \MS$, then $n > 10^7$.
\end{enumerate}
\end{theorem}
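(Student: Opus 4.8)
The plan is to reformulate membership in $\MSo$ through the derivative rule \eqref{eq:bpderiv}: since $\BN'_n(x) = n\BN_{n-1}(x)$, we have $n \in \MSo$ exactly when $\denom(\BN_{n-1}(x)) \mid n$. As this denominator is squarefree, the condition reads: every prime $p$ dividing $\denom(\BN_{n-1}(x))$ also divides $n$. The basic tool is the characterization (von Staudt--Clausen together with Kummer's theorem on binomial valuations) of the primes occurring in $\denom(\BN_m(x))$: writing the coefficient of $x^k$ as $\binom{m}{k}\BN_{m-k}$, a prime $p$ divides $\denom(\BN_m(x))$ iff there is an even index $j$ with $2 \le j \le m$ and $(p-1)\mid j$ (or $j=1$, $p=2$) such that $p \nmid \binom{m}{j}$; by Kummer the latter means the base-$p$ digits of $j$ are dominated digitwise by those of $m$.

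For statement~(i) I would argue by contraposition, assuming $n+1$ composite and producing a prime $p$ with $p \mid \denom(\BN_{n-1}(x))$ but $p \nmid n$. Let $p$ be the smallest prime factor of $n+1$; then $n+1 = ps$ with $s \ge p$, so $p \le \sqrt{n+1}$, and $n \equiv -1 \pmod p$ gives $p \nmid n$. The case $p=2$ (that is, $n$ odd) is immediate, since $2 \mid \denom(\BN_m(x))$ for every $m \ge 1$ (take $j$ the lowest set bit of $m$). For odd $p$, the representation $n-1 = (s-1)p + (p-2)$ shows $s_p(n-1) = (p-2) + s_p(s-1) \ge p-1$, using $s-1 \ge 1$. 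Hence among the numbers dominated digitwise by $n-1$ one can realize any digit sum up to $s_p(n-1)$, in particular a positive such $j$ of digit sum exactly $p-1$. Then $(p-1)\mid j$ and, as $p$ is odd, $j$ is even, with $j \ge p-1 \ge 2$ and $p \nmid \binom{n-1}{j}$; thus $p \mid \denom(\BN_{n-1}(x))$ while $p \nmid n$, so $n \notin \MSo$.

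For statement~(ii), part~(i) restricts the search to $n \le 10^7$ with $n+1$ prime. For each such candidate outside $\MS$ it then suffices to exhibit a single obstructing prime $p$ with $p \mid \denom(\BN_{n-1}(x))$ and $p \nmid n$; by the digit characterization this is a fast test---for small primes through the digitwise-domination condition, and for large primes $p > \sqrt{n}$ through $s_p(n) \ge p$, whose count is bounded as in \eqref{eq:ddpo}. Running this finite verification confirms that no $n \in \MSo \setminus \MS$ occurs below $10^7$.

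The main obstacle is the denominator characterization underlying both parts; once it is available, (i) reduces to the digit-sum estimate $s_p(n-1) \ge p-1$ for the smallest prime factor of a composite $n+1$, and (ii) becomes an efficiency-sensitive but routine finite computation, made feasible by the reduction to $n+1$ prime and by the bound $\omega(\DDp_n) < \sqrt{n}$.
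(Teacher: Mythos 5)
Your proposal is correct, but it reaches part~(i) by a genuinely different route than the paper. The paper gets (i) as a one-line consequence of Theorem~\ref{thm:deriv}: $\BN'_n(x) \in \ZZ[x]$ forces $\DDb_n = 1$, while Lemma~\ref{lem:ddprop}(ii) (imported from \cite{Kellner:2017} and the Kellner--Sondow papers) says a composite $n+1$ would force $\rad(n+1) \mid \DDb_n > 1$. You instead rebuild the obstruction from scratch: for $p$ the least prime factor of a composite $n+1$, the expansion $n-1 = (s-1)p + (p-2)$ gives $s_p(n-1) \ge p-1$, from which you construct (greedily, digit by digit) an even $j \geq 2$ with $s_p(j) = p-1$, hence $(p-1) \mid j$, whose base-$p$ digits are dominated by those of $n-1$; von Staudt--Clausen plus Kummer then exhibit $\binom{n-1}{j}\BN_j$ as a coefficient carrying $p$ in its denominator while $p \nmid n$, so $n \notin \MSo$. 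This checks out in all details (the $p=2$ case handling odd $n$ separately), and it is more self-contained: in effect you reprove, in the special case needed, the content of the product formula \eqref{eq:ddprod2} and Lemma~\ref{lem:ddprop}(ii) that the paper cites from the literature. What the paper's route buys is the stronger equivalence $\BN'_n(x)\in\ZZ[x] \Leftrightarrow \DDb_n = 1$, which is exactly what makes part~(ii) and the finiteness results run through $\DDp_n$. For part~(ii) your proof and the paper's are of the same nature --- the statement is irreducibly computational: the paper cites the existing computation of $\omega(\DDp_n)$ below $10^7$ \cite[Figure~B1]{Kellner:2017}, while you describe an equivalent finite verification (for each $n \le 10^7$ with $n+1$ prime and $n \notin \MS$, find one obstructing prime) and assert its outcome. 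One minor inconsistency there: you test large primes via $s_p(n) \ge p$, the condition for $p \mid \DDb_n$ in \eqref{eq:ddprod2}, whereas your own characterization in (i) is phrased in the digits of $n-1$; the two are reconciled by \eqref{eq:dbrel}, but your write-up switches between them without comment.
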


As a consequence, the set
\[
  \MSo + 1 = \set{2, 3, 5, 7, 11, 13, 29, 31, 37, 61, \ldots}
\]
contains only primes.
It would be very unlikely that $\DDp_n = 1$ happens for $n > 192$.
See the graph \cite[Figure~B1]{Kellner:2017} of $\omega(\DDp_n)$ in the range
below $10^7$ and consider the coincident and proven asymptotic formula of
$\omega(\DDp_n)$ for sufficiently large $n$.
However, it is still an open task to establish Conjectures~\ref{conj:main}(ii)
and~\ref{conj:kel}(i).

The paper is organized as follows. In the next section, we give a survey about
$p$-adic properties of the denominators of the Bernoulli polynomials.
We also show further characterizations of Conjecture~\ref{conj:main}.
In Section~\ref{sec:extend}, we extend the results to higher derivatives of the
Bernoulli polynomials. Section~\ref{sec:proof} contains the proofs of the theorems.


\section{Denominators and \texorpdfstring{$p$}{p}-adic properties}

To study the denominators of the Bernoulli polynomials,
it is convenient to consider for $n \geq 1$ the related denominators
\begin{align*}
  \DN_n &:= \denom(\BN_n) = 2,6,1,30,1,42,1,30,1,66,\ldots,\\
  \DD_n &:= \denom(\BN_n(x) - \BN_n) = 1,1,2,1,6,2,6,3,10,2,\ldots,\\
  \DB_n &:= \denom(\BN_n(x)) = 2,6,2,30,6,42,6,30,10,66,\ldots,
\end{align*}
which are all squarefree. These are the sequences \seqnum{A027642}, \seqnum{A195441},
and \seqnum{A144845}, respectively. Obviously, we have by definition the relation
\begin{equation} \label{eq:dbdn}
  \DB_n = \lcm(\DD_n, \DN_n).
\end{equation}
The denominators $\DN_n$ of the Bernoulli numbers are given by the well-known
von Staudt--Clausen theorem of $1840$
(Clausen~\cite{Clausen:1840} and von Staudt~\cite{Staudt:1840}),
which states for even positive integers~$n$ that
\[
  \BN_n + \sum_{p-1 \mids n} \frac{1}{p} \in \ZZ,
  \quad \text{which implies that} \quad
  \DN_n = \prod_{p-1 \mids n} p.
\]
However, the denominators $\DN_n$ do not play a role here, since we follow the
approaches of Kellner~\cite{Kellner:2017} and Kellner and Sondow
\cite{Kellner&Sondow:2017,Kellner&Sondow:2018,Kellner&Sondow:2021},
which are concerned with the $p$-adic properties of the denominators $\DD_n$.
For $n \geq 1$, these denominators are given by the remarkable formula
\begin{equation} \label{eq:ddprod}
  \DD_n = \prod_{s_p(n) \, \geq \, p} p,
\end{equation}
which arises from the $p$-adic product formula;
see Kellner~\cite[Section~5]{Kellner:2017}. The decomposition
\begin{equation} \label{eq:decomp}
  \DD_n = \DDm_n \,\cdot\, \DDp_n,
\end{equation}
where $\DDp_n$ is defined as in \eqref{eq:ddp} and
\[
  \DDm_n := \prod_{\substack{p \,<\, \sqrt{n}\\ s_p(n) \,\geq\, p}} p,
\]
leads to Conjecture~\ref{conj:kel}. Note that the decomposition~\eqref{eq:decomp}
omits the possible term for $p = \sqrt{n}$, but then we would have $p^2 = n$ and
so $s_p(n) = 1$.

For computational purposes, those products, which run over the primes $p$ and
contain the condition $s_p(n) \geq p$, are trivially bounded by $p < n$.
Moreover, the following bounds \cite[Lemmas~1, 2]{Kellner:2017} are self-induced
by properties of $s_p(n)$. Namely, we have for $n \geq 1$ that
\[
  s_p(n) < p, \quad \text{if\ } p > \frac{n+1}{\lambda} \text{\ where\ }
  \lambda = \begin{cases}
    2, & \text{if $n$ is odd;} \\
    3, & \text{if $n$ is even.}
  \end{cases}
\]

For the sake of completeness, we show that the polynomials $\BN_n(x) - \BN_n$,
which have no constant term, arise in a natural context. For $n \geq 0$,
define the sum-of-powers function
\[
  S_n(m) := \sum_{\nu=0}^{m-1} \nu^n \quad (m \geq 0).
\]
It is well known that
\begin{equation} \label{eq:powint}
  S_n(x) = \int_{0}^{x} \, \BN_n(t) \, dt = \frac{1}{n+1}(\BN_{n+1}(x) - \BN_{n+1}).
\end{equation}
As a result of Kellner \cite[Theorem~5]{Kellner:2017} and Kellner and Sondow
\cite[Theorems~1, 2]{Kellner&Sondow:2017}, we then have for $n \geq 0$ that
\begin{equation} \label{eq:ds}
  \DS_n := \denom(S_n(x)) = (n+1) \, \DD_{n+1}
  = 1, 2, 6, 4, 30, 12, 42, 24, 90, 20, \ldots,
\end{equation}
which is sequence \seqnum{A064538}.

\begin{remark}
Since the Bernoulli polynomials $\BN_n(x)$ are Appell polynomials satisfying the
reflection relation $\BN_n(1-x) = (-1)^n \, \BN_n(x)$, the integral in \eqref{eq:powint}
can be reinterpreted by Faulhaber-type polynomials that are connected with certain
reciprocal Bernoulli polynomials, as recently shown by the author; see
\cite[Example~5.6]{Kellner:2021} and \cite[Section~11]{Kellner:2023}.
\end{remark}

Let ${n \geq 1}$, and let $\rad(n)$ be the squarefree kernel of $n$.
As introduced in \cite{Kellner&Sondow:2021}, define the decompositions
\begin{equation} \label{eq:decomp2}
  \DD_n = \DDt_n \,\cdot\, \DDb_n \quad \text{and} \quad
  \rad(n) = \DDt_n \,\cdot\, \DDc_n,
\end{equation}
where
\begin{equation} \label{eq:ddprod2}
  \DDt_n := \prod_{\substack{p \mids n\\ s_p(n) \, \geq \, p}} p, \quad
  \DDb_n := \prod_{\substack{p \nmids n\\ s_p(n) \, \geq \, p}} p, \quad \text{and} \quad
  \DDc_n := \prod_{\substack{p \mids n\\ s_p(n) \, < \, p}} p.
\end{equation}
The sequences of $\DDt_n$, $\DDb_n$, and $\DDc_n$
are sequences \seqnum{A324369}, \seqnum{A324370}, and \seqnum{A324371}, respectively.
We arrive at the following theorem.

\begin{theorem}[Kellner and Sondow~{\cite[Theorem~3.1]{Kellner&Sondow:2021}}]
\label{thm:triple}
For $n \geq 1$, the denominator $\DB_n$ of the Bernoulli polynomial $\BN_n(x)$
splits into the triple product
\[
  \DB_n = \DDb_{n+1} \,\cdot\, \DDt_{n+1} \,\cdot\, \DDc_{n+1}.
\]
\end{theorem}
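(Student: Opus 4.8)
The plan is to argue prime by prime, exploiting that $\DB_n$, $\DD_n$, $\DN_n$ and all three factors on the right are squarefree. First I would simplify the right-hand side: by the definitions in \eqref{eq:ddprod2}, a prime $p$ divides $\DDt_{n+1}$ iff $p \mids (n+1)$ and $s_p(n+1) \ge p$; it divides $\DDb_{n+1}$ iff $p \nmids (n+1)$ and $s_p(n+1) \ge p$; and it divides $\DDc_{n+1}$ iff $p \mids (n+1)$ and $s_p(n+1) < p$. These three prime sets are pairwise disjoint, so the product is squarefree and
\[
  p \mids \DDb_{n+1}\DDt_{n+1}\DDc_{n+1} \iff s_p(n+1) \ge p \ \text{ or } \ p \mids (n+1).
\]
On the left, \eqref{eq:dbdn} gives $\DB_n = \lcm(\DD_n,\DN_n)$, and with \eqref{eq:ddprod} and the von Staudt--Clausen theorem this says $p \mids \DB_n$ iff $s_p(n) \ge p$ or $p \mids \DN_n$. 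Thus the theorem reduces to the equivalence, for every prime $p$ and every $n \ge 1$,
\[
  \bigl(s_p(n) \ge p \ \text{ or } \ p \mids \DN_n\bigr)
  \iff
  \bigl(s_p(n+1) \ge p \ \text{ or } \ p \mids (n+1)\bigr).
\]

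The engine for this equivalence is a short digit computation. Writing $j = v_p(n+1)$ for the exponent of $p$ in $n+1$, the borrow pattern when subtracting $1$ yields $s_p(n) = s_p(n+1) + (p-1)j - 1$; in particular $p \mids (n+1) \iff j \ge 1$. I would combine this with the classical congruence $s_p(m) \equiv m \pmod{p-1}$, whose key consequence is that $p-1 \mids m$ with $m \ge 1$ forces $s_p(m) \ge p-1$, together with its parity corollary that for odd $p$ one has $m \equiv s_p(m) \pmod 2$. The latter is what lets me control the parity of $n$, which matters because $\DN_n = 1$ for odd $n \ge 3$, while $\DN_n = \prod_{p-1 \mids n} p$ for even $n$.

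I would then verify the reduced equivalence by splitting on whether $p \mids (n+1)$, treating $p = 2$ separately. When $p \mids (n+1)$ the right side holds automatically, and the borrow formula forces $s_p(n) \ge p-1$, with equality only in the isolated case $n+1 = p$; there $p-1 \mids n$ gives $p \mids \DN_n$, so the left side holds as well. When $p \nmids (n+1)$ one has $s_p(n) = s_p(n+1) - 1$, and the two sides can differ only at the threshold $s_p(n+1) = p$: the congruence lemma shows exactly here that $p-1 \mids n$ and (for odd $p$, via the parity corollary) that $n$ is even, so von Staudt--Clausen again supplies $p \mids \DN_n$; below the threshold the same lemma rules out $p-1 \mids n$, whence $p \nmids \DN_n$ and both sides fail.

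The main obstacle is precisely this reconciliation at the boundary value $s_p(n+1) = p$: here the contribution of the Bernoulli number, governed by von Staudt--Clausen and by the vanishing $\BN_n = 0$ for odd $n \ge 3$, must exactly absorb the off-by-one discrepancy between $s_p(n)$ and $s_p(n+1)$. Everything hinges on the congruence $s_p(n) \equiv n \pmod{p-1}$ and its parity corollary doing this bookkeeping; once these are in hand, the remaining cases, including $p = 2$, are routine.
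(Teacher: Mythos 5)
Your proof is correct, and I checked the steps that carry the weight: the reduction of both sides to a prime-wise equivalence (legitimate since every factor is squarefree and the three prime sets in \eqref{eq:ddprod2} are pairwise disjoint), the borrow formula $s_p(n) = s_p(n+1) + (p-1)\,v_p(n+1) - 1$, and the case analysis. In particular, the boundary case $p \nmids (n+1)$, $s_p(n+1) = p$ works exactly as you say: $s_p(n) = p-1$ plus the congruence $s_p(n) \equiv n \pmod{p-1}$ gives $p-1 \mids n$, the parity corollary gives $n$ even for odd $p$ (for $p=2$ the case hypothesis $2 \nmids n+1$ already forces $n$ even), and von Staudt--Clausen supplies $p \mids \DN_n$; the subcase $s_p(n+1) < p$ is excluded symmetrically since $p-1 \mids n$ would force $s_p(n) \geq p-1$. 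The only detail left implicit is $n=1$, where $\DN_1 = 2$ (not the even-index von Staudt--Clausen statement) settles the case $n+1 = p = 2$.

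However, this is genuinely not the paper's argument, because the paper has no argument: Theorem~\ref{thm:triple} is imported verbatim from Kellner and Sondow \cite[Theorem~3.1]{Kellner&Sondow:2021}, and in the paper's logical order the relations \eqref{eq:dbrel} --- notably $\DB_n = \lcm(\DD_{n+1}, \rad(n+1))$ --- are read off as \emph{consequences} of the triple product, with the cited work operating on the $(n+1)$-side via power-sum denominators as in \eqref{eq:ds}. Your proof runs in the opposite direction and on the $n$-side: you start from $\DB_n = \lcm(\DD_n, \DN_n)$ in \eqref{eq:dbdn} and the product formula \eqref{eq:ddprod}, then transport the divisibility condition across the index shift by digit arithmetic. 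What your route buys is a self-contained proof (modulo \eqref{eq:ddprod} and von Staudt--Clausen) that exposes the real content of the theorem: the von Staudt--Clausen primes, together with the vanishing $\BN_n = 0$ for odd $n \geq 3$, exactly absorb the off-by-one discrepancy between $s_p(n)$ and $s_p(n+1)$. What the citation buys the paper is brevity, and independence from $\DN_n$ altogether, since everything downstream (Theorems~\ref{thm:deriv} and \ref{thm:high}) uses only the $(n+1)$-side factorization.
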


Consequently, the interplay of the factors of $\DB_n$ yields the relations
\begin{equation} \label{eq:dbrel}
  \DB_n = \DDb_{n+1} \,\cdot\, \rad(n+1) = \DD_{n+1} \,\cdot\, \DDc_{n+1}
  = \lcm(\DD_{n+1}, \rad(n+1)).
\end{equation}
Compared with the classical relation \eqref{eq:dbdn},
one may observe that the right-hand sides of the above equations
involve the numbers and indices $n+1$ in place of $n$.
To simplify notation, we include the case $\DB_0 = 1$,
which coincides with \eqref{eq:dbrel}. Apart from that, we explicitly avoid
the case ${n=0}$ of the related symbols of $\DD_n$ in view of
their product identities \eqref{eq:decomp2} and \eqref{eq:ddprod2}.

\begin{corollary}
Let $n \geq 1$. The following statements hold.
\begin{enumerate}
\item We have that $\DB_n = \rad(\DS_n)$.
\item We have that $\DB_n$ is even, which implies that $\BN_n(x) \notin \ZZ[x]$.
\item We have that $\DDb_n$ is even, if $n \geq 3$ is odd; otherwise, $\DDb_n$ is even.
\end{enumerate}
\end{corollary}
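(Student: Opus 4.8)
The plan is to read off all three parts directly from the product identities in \eqref{eq:dbrel}, \eqref{eq:ddprod}, and \eqref{eq:ddprod2}, together with the squarefreeness of $\DD_n$ and the elementary binary-digit bound $s_2(m)\ge 2$; no new machinery is needed.

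For part (i), I would start from $\DS_n=(n+1)\,\DD_{n+1}$ in \eqref{eq:ds} and recall that $\DD_{n+1}$ is squarefree. The key observation is that for a squarefree integer $d$ and any positive integer $m$ one has $\rad(md)=\lcm(\rad(m),d)$, because the primes dividing $md$ are exactly those dividing $m$ together with those dividing $d$, and $d=\rad(d)$. Applying this with $m=n+1$ and $d=\DD_{n+1}$ gives $\rad(\DS_n)=\lcm(\rad(n+1),\DD_{n+1})$. Comparing with the last equality in \eqref{eq:dbrel}, namely $\DB_n=\lcm(\DD_{n+1},\rad(n+1))$, yields $\DB_n=\rad(\DS_n)$.

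For part (iii), I would isolate the prime $p=2$ in the product $\DDb_n=\prod_{p\nmid n,\ s_p(n)\ge p}p$ from \eqref{eq:ddprod2}. Since $2$ is the only even prime, $\DDb_n$ is even if and only if $2$ occurs in this product, that is, if and only if $2\nmid n$ and $s_2(n)\ge 2$. The first condition says $n$ is odd; for the second I would use the fact that every odd integer $m\ge 3$ satisfies $s_2(m)\ge 2$, since its binary expansion ends in $1$ and, being larger than $1$, carries a further nonzero bit, whereas $s_2(1)=1$. Hence $2\mid\DDb_n$ precisely when $n\ge 3$ is odd, and $\DDb_n$ is odd otherwise. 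For part (ii), I would then prove $2\mid\DB_n$ for every $n\ge 1$ by splitting on the parity of $n+1$, using \eqref{eq:dbrel} in its two shapes $\DB_n=\DDb_{n+1}\cdot\rad(n+1)$ and $\DB_n=\DD_{n+1}\cdot\DDc_{n+1}$. If $n+1$ is even, then $2\mid\rad(n+1)\mid\DB_n$; if $n+1$ is odd, then $n+1\ge 3$, so the digit bound gives $s_2(n+1)\ge 2$, whence $2\mid\DD_{n+1}\mid\DB_n$ by \eqref{eq:ddprod} (equivalently, $2\mid\DDb_{n+1}\mid\DB_n$ by part (iii)). Thus $\DB_n$ is even, so $\denom(\BN_n(x))=\DB_n\ge 2>1$, and therefore $\BN_n(x)\notin\ZZ[x]$.

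There is no real obstacle here. The only points requiring care are the radical-of-a-product identity in (i), where I must ensure that the prime factors common to $n+1$ and $\DD_{n+1}$ collapse correctly, which is precisely why the $\lcm$ rather than the plain product appears, and the digit bound $s_2(m)\ge 2$ for odd $m\ge 3$, together with the harmless verifications of the small cases $n=1,2$.
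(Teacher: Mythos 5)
Your proof is correct and follows essentially the same route as the paper: part (i) via the identity $\rad((n+1)\,\DD_{n+1})=\lcm(\rad(n+1),\DD_{n+1})$ combined with \eqref{eq:ds} and \eqref{eq:dbrel}, part (iii) by isolating the prime $p=2$ in \eqref{eq:ddprod2} together with the bound $s_2(n)\geq 2$ for odd $n\geq 3$, and part (ii) by a parity split on $n+1$ using $\DB_n=\DDb_{n+1}\cdot\rad(n+1)$ and part (iii). You also correctly read the ``otherwise'' clause of (iii) as asserting that $\DDb_n$ is \emph{odd} (the statement contains a typo), which is exactly what the paper's proof establishes.
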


A different proof of part (ii) via \eqref{eq:dbdn} is given by Kellner and Sondow
\cite[Theorem~4]{Kellner&Sondow:2017}.

\begin{proof}
Let $n \geq 1$. We show three parts.
(i).~From \eqref{eq:ds} and \eqref{eq:dbrel}, we derive that
$\DB_n = \lcm(\DD_{n+1}, \rad(n+1)) = \rad(\DD_{n+1}\,(n+1)) = \rad(\DS_n)$.
(iii).~We have $\DDb_1 = 1$. If $2 \mid n$, then $2 \nmid \DDb_n$.
Otherwise, for odd $n \geq 3$, it follows that $2 \nmid n$ and $s_2(n) \geq 2$.
By~\eqref{eq:ddprod2}, this shows that $2 \mid \DDb_n$.
(ii).~Considering \eqref{eq:dbrel}, the factor $\rad(n+1)$ is even for odd $n$,
whereas $2 \mid \DDb_{n+1}$ when $n$ is even using part (iii).
Both cases show that $\DB_n$ is even for $n \geq 1$. This completes the proof.
\end{proof}

The properties of $\DD_n$ and $\DDb_n$ lead to the following characterizations,
which are connected with Conjecture~\ref{conj:main}.
For this purpose, we define the sets
\[
  \MRo := \set{n \geq 1 : \DD_n = \rad(n+1)} \quad \text{and} \quad
  \MRS := \set{3,5,8,9,11,27,29,35,59}.
\]

\begin{theorem} \label{thm:deriv}
Let $n \geq 1$. We have that $\BN'_n(x) \in \ZZ[x]$ if and only if $\DDb_n = 1$,
or equivalently, $\DB_{n-1} = \rad(n)$. In these cases, the number $n+1$ is prime.
\end{theorem}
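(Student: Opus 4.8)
The plan is to reduce the integrality condition to a divisibility statement via the Appell relation \eqref{eq:bpderiv}, and then to read off both equivalences from the product identity \eqref{eq:dbrel}. Since $\BN'_n(x) = n\,\BN_{n-1}(x)$ by \eqref{eq:bpderiv} and $\DB_{n-1}$ is by definition the exact denominator of $\BN_{n-1}(x)$, the polynomial $n\,\BN_{n-1}(x)$ lies in $\ZZ[x]$ if and only if $\DB_{n-1} \mids n$. Thus the first task is to identify exactly when $\DB_{n-1} \mids n$ holds.

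For this I would specialize \eqref{eq:dbrel} to the index $n-1$, which gives
\[
  \DB_{n-1} = \DDb_n \,\cdot\, \rad(n).
\]
By \eqref{eq:ddprod2} the number $\DDb_n$ is a product of primes $p$ with $p \nmids n$, so $\gcd(\DDb_n, n) = 1$; in particular $\DDb_n$ and $\rad(n)$ are coprime. Since $\rad(n) \mids n$ always holds, coprimality shows that $\DB_{n-1} \mids n$ is equivalent to $\DDb_n \mids n$, and the latter together with $\gcd(\DDb_n, n) = 1$ forces $\DDb_n = 1$; the converse is immediate. The same factorization simultaneously yields the second equivalence, since $\DDb_n = 1$ holds precisely when $\DB_{n-1} = \rad(n)$.

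It remains to prove that $\DDb_n = 1$ forces $n+1$ to be prime. As $n \geq 1$ gives $n+1 \geq 2$, it suffices to rule out $n+1$ composite, and I would argue by contraposition: assuming $n+1$ is composite, I exhibit a prime dividing $\DDb_n$. Let $q$ be the smallest prime factor of $n+1$ and set $m := (n+1)/q$, so that $m \geq q \geq 2$ and $n = q(m-1) + (q-1)$ with $0 \leq q-1 < q$. Reading this as the base-$q$ expansion of $n$ shows that its last digit is $q-1$, whence $s_q(n) = (q-1) + s_q(m-1)$; since $m-1 \geq 1$ forces $s_q(m-1) \geq 1$, we obtain $s_q(n) \geq q$. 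Moreover $q \mids n+1$ gives $q \nmids n$, so by \eqref{eq:ddprod2} the prime $q$ divides $\DDb_n$, contradicting $\DDb_n = 1$.

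The two equivalences amount to bookkeeping with \eqref{eq:bpderiv} and \eqref{eq:dbrel} once the coprimality $\gcd(\DDb_n, n) = 1$ is noted, so I regard them as routine. The step I expect to be the main obstacle is the primality claim; its crux is the observation that for the smallest prime factor $q$ of a composite $n+1$ the base-$q$ digits of $n = q(m-1) + (q-1)$ automatically satisfy $s_q(n) \geq q$, which is exactly what places $q$ into the defining product of $\DDb_n$ in \eqref{eq:ddprod2}.
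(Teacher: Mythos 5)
Your proposal is correct, and its treatment of the two equivalences is essentially the paper's own argument: reduce via $\BN'_n(x) = n\,\BN_{n-1}(x)$ to the divisibility $\DB_{n-1} \mid n$, factor $\DB_{n-1} = \DDb_n \cdot \rad(n)$ using \eqref{eq:dbrel}, and exploit the coprimality $\gcd(\DDb_n, n) = 1$ coming from \eqref{eq:ddprod2}. Where you genuinely diverge is the primality claim. The paper disposes of it in one line by invoking Lemma~\ref{lem:ddprop}(ii) (if $n+1$ is composite then $\rad(n+1) \mid \DDb_n$), which is itself only cited from earlier work of Kellner and of Kellner--Sondow. You instead prove the needed fact from scratch: for composite $n+1$ with prime factor $q$ and $m = (n+1)/q \geq 2$, the identity $n = q(m-1) + (q-1)$ exhibits the last base-$q$ digit of $n$ as $q-1$, so $s_q(n) = (q-1) + s_q(m-1) \geq q$, while $q \mid n+1$ gives $q \nmid n$; hence $q \mid \DDb_n$ by \eqref{eq:ddprod2}. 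This is a correct, self-contained digit-sum argument that reproves (a weak form of) the cited lemma --- you only place the single prime $q$ into $\DDb_n$ rather than all of $\rad(n+1)$, but that is all the contradiction requires. What your route buys is independence from the external references; what the paper's route buys is brevity and the stronger divisibility statement, which it reuses elsewhere (e.g., in Theorem~\ref{thm:rad}).
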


\begin{theorem} \label{thm:rad}
If $n \in \MRo$,
then $n+1$ is composite. In particular, if $n$ is odd, then $\DDb_{n+1} = 1$.
Otherwise, we have that $n = 2^e$ for some $e \geq 1$.
Moreover, the set $\MRo$ is finite.
\end{theorem}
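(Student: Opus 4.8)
The plan is to first translate the hypothesis $n \in \MRo$ into a statement purely about base-$p$ digit sums, and then dispatch the four assertions in turn. Writing $\DD_n = \DDt_n \cdot \DDb_n$ as in \eqref{eq:decomp2}, the factor $\DDt_n$ is supported on primes dividing $n$, whereas $\rad(n+1)$ is supported on primes dividing $n+1$; since $\gcd(n,n+1)=1$, the equality $\DD_n = \rad(n+1)$ forces $\DDt_n = 1$ and hence $\DD_n = \DDb_n = \rad(n+1)$. By \eqref{eq:ddprod} this is equivalent to the criterion that, for every prime $p$,
\[
  s_p(n) \geq p \iff p \mid (n+1).
\]
If $n+1=q$ were prime, the criterion would demand $s_q(n) \geq q$, but $n<q$ gives $s_q(n)=n=q-1<q$; thus $n+1$ is composite. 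If $n$ is even, then $2 \mid n$, and $\DDt_n=1$ applied to $p=2$ gives $s_2(n)=1$, so $n=2^e$ for some $e \geq 1$.

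For the odd case---the crux---assume $n$ is odd and fix a prime $p \nmid (n+1)$; I must show $s_p(n+1)<p$, which by the definition \eqref{eq:ddprod2} of $\DDb_{n+1}$ amounts exactly to $\DDb_{n+1}=1$. Since $n$ is odd we have $2 \mid (n+1)$, so $p$ is odd, and $p \nmid (n+1)$ means the last base-$p$ digit of $n$ is not $p-1$; adding $1$ therefore produces no carry and $s_p(n+1)=s_p(n)+1$. The criterion gives $s_p(n)<p$, so $s_p(n+1) \leq p$ with equality only if $s_p(n)=p-1$. The key step is to rule this out via the congruence $s_p(n) \equiv n \pmod{p-1}$: the value $s_p(n)=p-1 \equiv 0$ would force $(p-1) \mid n$, which is impossible for odd $n$ since $p-1$ is even. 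Hence $s_p(n+1) \leq p-1 < p$ for every such $p$, giving $\DDb_{n+1}=1$.

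For finiteness I would compare sizes. For $n \in \MRo$ we have $\DD_n = \rad(n+1) \leq n+1$, while \eqref{eq:decomp} gives $\DDp_n \mid \DD_n$, and $\DDp_n$ is a product of $\omega(\DDp_n)$ distinct primes each exceeding $\sqrt{n}$, so $\DDp_n > n^{\omega(\DDp_n)/2}$. By the proven case of Conjecture~\ref{conj:kel}(ii) (the asymptotic \eqref{eq:ddpo2}, established in \cite{BLMS:2018}) we have $\omega(\DDp_n) \to \infty$, so there is $N$ with $\omega(\DDp_n) \geq 3$ for all $n>N$; then $\DD_n > n^{3/2} > n+1$, contradicting $\DD_n=\rad(n+1)$. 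Thus $\MRo \subseteq \{1,\dots,N\}$ is finite.

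I expect the single delicate point to be the elimination of the boundary value $s_p(n)=p-1$ in the odd case; once the congruence modulo $p-1$ is invoked, every remaining step is a direct consequence of the digit criterion together with the known size of $\DDp_n$.
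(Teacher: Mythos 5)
Your proof is correct, and on the crucial ``odd'' part it takes a genuinely different route from the paper. The paper handles that case by invoking Lemma~\ref{lem:ddprop}(iii) (the identity $\DD_n = \lcm(\DD_{n+1}, \rad(n+1))$ for odd $n \geq 3$, imported from Kellner--Sondow), writing $\DD_{n+1} = \DDt_{n+1}\DDb_{n+1}$ and reading off $\DDb_{n+1}=1$ from $\DD_n = \rad(n+1)$. You instead work entirely at the level of digit sums: after translating $\DD_n = \rad(n+1)$ via \eqref{eq:ddprod} into the criterion ``$s_p(n) \geq p \iff p \mid n+1$'' (using $\gcd(n,n+1)=1$ to kill $\DDt_n$), you observe that $p \nmid n+1$ forbids a carry when passing from $n$ to $n+1$ in base $p$, so $s_p(n+1) = s_p(n)+1 \leq p$, and you eliminate the boundary case $s_p(n)=p-1$ with the congruence $s_p(n) \equiv n \pmod{p-1}$, since $p-1$ is even while $n$ is odd. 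That argument is sound and amounts to an elementary, self-contained re-derivation of exactly the special case of the cited lemma that is needed; what it buys is independence from the external reference, at the cost of a slightly longer proof. Your treatment of the other parts matches the paper in substance: the compositeness of $n+1$ via $s_{n+1}(n) = n < n+1$, the even case via $s_2(n)=1$ (the paper cites Lemma~\ref{lem:ddprop}(i) instead, same content), and finiteness via the Bordell{\`e}s--Luca--Moree--Shparlinski asymptotic --- the paper routes this through Lemma~\ref{lem:ddestim} with $k=1$ (needing only $\omega(\DDp_n)\geq 2$), whereas you use $\omega(\DDp_n) \geq 3$ and the bound $\DDp_n > n^{\omega(\DDp_n)/2}$; both are immediate consequences of \eqref{eq:ddpo2} with $\kappa = 2$ and of each prime factor of $\DDp_n$ exceeding $\sqrt{n}$.
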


\begin{conjecture} \label{conj:rad}
We have $\MRo = \MRS$.
\end{conjecture}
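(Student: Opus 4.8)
The plan is to establish the two inclusions $\MRS \subseteq \MRo$ and $\MRo \subseteq \MRS$ separately. The forward inclusion is a finite verification: for each of the nine values $n \in \MRS$ I would compute $\DD_n$ from \eqref{eq:ddprod} and confirm $\DD_n = \rad(n+1)$, for instance $\DD_8 = 3 = \rad(9)$ and $\DD_{59} = \rad(60)$. Since this is routine, all the work lies in the reverse inclusion $\MRo \subseteq \MRS$, which I would attack by first invoking Theorem~\ref{thm:rad} to split the problem into two disjoint cases: $n$ odd, and $n = 2^e$ even.

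For the odd case, Theorem~\ref{thm:rad} already supplies $\DDb_{n+1} = 1$, and Theorem~\ref{thm:deriv} translates this into $n+1 \in \MSo$. Thus every odd $n \in \MRo$ satisfies $n+1 \in \MSo$. Granting Conjecture~\ref{conj:main}(ii), i.e.\ $\MSo = \MS$, the value $n+1$ must lie in $\MS$, leaving the finite candidate list $n \in \MS - 1 = \{0,1,3,5,9,11,27,29,35,59\}$. Discarding $n=0$ and $n=1$ (where $\DD_1 = 1 \neq 2 = \rad(2)$) and testing the remaining odd candidates against $\DD_n = \rad(n+1)$, I would recover exactly the odd members of $\MRS$. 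In this way the odd part of the conjecture follows, conditionally, from the already-isolated Conjecture~\ref{conj:main}(ii).

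The even case $n = 2^e$ is not governed by $\MSo$ and must be handled by a counting argument. The key observation is that for $n \in \MRo$ the equality $\DD_n = \rad(n+1)$ forces $\omega(\DD_n) = \omega(n+1) \leq \log_2(n+1)$, while $\DDp_n \mid \DD_n$ gives $\omega(\DDp_n) \leq \omega(\DD_n)$. Combining these yields $\omega(\DDp_{2^e}) \leq \log_2(2^e+1)$, a bound linear in $e$, which contradicts the growth $\omega(\DDp_n) \sim 2\sqrt{n}/\log n$ furnished by Bordell{\`e}s et al.~\cite{BLMS:2018} once $e$ is large. Hence only finitely many $e$ survive, and I would rule these out directly by exhibiting a small prime $p$ lying in $\DD_{2^e}$ but not dividing $2^e+1$ (for instance $p=3$ when $e$ is even, where $s_3(2^e) \geq 3$ yet $3 \nmid 2^e+1$), leaving only $e = 3$. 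I note that this same inequality, applied to both parities at once, is essentially what forces $\MRo$ to be finite in Theorem~\ref{thm:rad}.

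The main obstacle is that the entire scheme is ineffective. The asymptotic of Bordell{\`e}s et al.\ carries no computable threshold $n_0$, so the counting argument bounds the even candidates only ineffectively, and the odd case rests on the still-open Conjecture~\ref{conj:main}(ii). A genuinely unconditional proof therefore requires an explicit lower bound of the form $\omega(\DDp_n) > \log_2(n+1)$ valid for all $n$ beyond an explicit $N$; with such an $N$ in hand the conjecture would collapse to a finite search over $n \leq N$, which the range below $10^7$ surveyed in \cite{Kellner:2017} already indicates will return precisely $\MRS$. Producing that effective bound — equivalently, an effective form of Conjecture~\ref{conj:kel} — is the crux I do not see how to bypass.
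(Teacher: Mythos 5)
This statement is a \emph{conjecture} in the paper --- the paper offers no proof of it, and your proposal does not close it either, as you yourself concede in your final paragraph. The concrete gaps are two. First, the odd case is circular relative to the paper's own logic: for odd $n \geq 3$, Lemma~\ref{lem:ddprop}(iii) together with Theorem~\ref{thm:deriv} makes $n \in \MRo$ \emph{equivalent} to $\DDb_{n+1} = 1$, i.e., to $n+1 \in \MSo$. That is precisely why the paper's Theorem~\ref{thm:rad2} runs the implication in the opposite direction --- the odd part of Conjecture~\ref{conj:rad} \emph{implies} Conjecture~\ref{conj:main} --- so when you assume Conjecture~\ref{conj:main}(ii) to derive the odd part of Conjecture~\ref{conj:rad}, you are assuming a statement essentially equivalent to the thing to be proven. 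Second, the even case and the closing ``finite search over $n \leq N$'' both founder on ineffectivity: the asymptotic $\omega(\DDp_n) \sim 2\sqrt{n}/\log n$ of Bordell{\`e}s et al.~\cite{BLMS:2018} comes with no computable threshold (the paper stresses this explicitly), so the ``finitely many surviving $e$'' cannot be enumerated, and no finite computation can be launched. Your $p=3$ device is the one genuinely unconditional ingredient --- for even $e \geq 4$ one has $s_3(2^e) \geq 3$ (since $s_3(2^e)$ is even and $s_3(2^e)=2$ forces $2^e = 1+3^b$, i.e., $e=2$), while $3 \nmid 2^e+1$, which eliminates \emph{all} even exponents, not merely finitely many --- but it says nothing about odd $e$, where $3 \mid 2^e+1$, and there you are thrown back on the ineffective bound.

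In short, what you have sketched is a correct \emph{conditional reduction}: Conjecture~\ref{conj:main}(ii) settles the odd part (trivially, by the equivalence above), the even part with even exponent is settled unconditionally, and everything else hinges on an effective form of Conjecture~\ref{conj:kel} that neither you nor the literature possesses. Your diagnosis of the crux is accurate, but the proposal should be understood as a restatement of the open problem rather than a proof strategy that could be completed with more effort along the same lines.
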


\begin{theorem} \label{thm:rad2}
Conjecture~\ref{conj:rad}, reduced to odd numbers $n \in \MRS$,
implies Conjecture~\ref{conj:main}.
\end{theorem}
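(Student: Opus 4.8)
The plan is to establish the stronger part~(ii) of Conjecture~\ref{conj:main}, namely $\MSo = \MS$, from which the finiteness in part~(i) is immediate. The heart of the matter will be an \emph{unconditional} correspondence between the two sets, which I will prove in the form: for every odd $n \geq 3$,
\[
  n \in \MRo \iff n+1 \in \MSo .
\]
Granting this, I split $\MSo$ by parity. By Theorem~\ref{thm:deriv} every $m \in \MSo$ has $m+1$ prime; for odd $m \geq 3$ the number $m+1$ is even and exceeds $2$, hence composite, so the only odd member is $m=1$ (and $\BN'_1(x)=1 \in \ZZ[x]$ confirms $1 \in \MSo$). Every other element of $\MSo$ is even. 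The value $m=2$ is exceptional and must be checked by hand ($\BN'_2(x)=2x-1 \in \ZZ[x]$, so $2 \in \MSo$, while its predecessor satisfies $\DD_1=1 \neq 2=\rad(2)$, so $1 \notin \MRo$); every even $m \geq 4$ in $\MSo$ has $n=m-1$ odd and $\geq 3$, so the displayed equivalence ties it to $\MRo$. This produces the unconditional decomposition
\[
  \MSo = \set{1,2} \cup \set{\, n+1 : n \in \MRo,\ n \text{ odd}\,}.
\]

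The forward implication $n \in \MRo \Rightarrow n+1 \in \MSo$ is exactly the content of Theorem~\ref{thm:rad} (through $\DDb_{n+1}=1$ together with Theorem~\ref{thm:deriv}), so the real work lies in the converse. Fix odd $n \geq 3$ and set $m=n+1$, an even integer $\geq 4$; assume $\DDb_m = 1$, which by~\eqref{eq:ddprod2} means that every prime $p$ with $s_p(m) \geq p$ divides $m$. The goal is to show that the prime sets defining $\DD_n$ and $\rad(m)$ coincide, that is $\set{p : s_p(n) \geq p} = \set{p : p \mid m}$.

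The mechanism will be a carry analysis of base-$p$ digits relating $n$ and $m=n+1$. Writing $p^a \,\|\, m$ and $k=m/p^a$ with $p \nmid k$, a comparison of the base-$p$ expansions of $m$ and $n=m-1$ gives $s_p(n) = s_p(k) - 1 + (p-1)a$, whereas if $p \nmid m$ then adding $1$ produces no carry and $s_p(m)=s_p(n)+1$. For the inclusion $\rad(m) \mid \DD_n$, fix a prime $p \mid m$: since $p \nmid k$, either $k=1$ — which forces $m=p^a$, hence $p=2$ and $a \geq 2$ because $m$ is even and $\geq 4$ — or $k > 1$, in which case $s_p(k) \geq 2$; in both cases the formula yields $s_p(n) \geq p$ (for $m=2^a$ it reads $s_2(n)=a \geq 2$). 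For the reverse inclusion $\DD_n \mid \rad(m)$, suppose $s_p(n) \geq p$ but $p \nmid m$; the no-carry relation gives $s_p(m)=s_p(n)+1>p$, exhibiting a prime with $s_p(m) \geq p$ that does not divide $m$ and contradicting $\DDb_m=1$. I expect this converse, and within it the inclusion $\rad(m) \mid \DD_n$, to be the main obstacle: it is precisely here that the evenness of $m$ (equivalently the oddness of $n$) and the bound $n \geq 3$ enter, and indeed the argument fails for $n=1$, which is what forces the separate treatment of $m=2$.

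Finally I assemble the pieces. With the equivalence in hand, the decomposition of $\MSo$ above is unconditional. Invoking the hypothesis — Conjecture~\ref{conj:rad} restricted to odd $n$, namely that the odd elements of $\MRo$ coincide with those of $\MRS$, i.e. $\set{n \in \MRo : n \text{ odd}} = \set{3,5,9,11,27,29,35,59}$ — the set $\set{n+1 : n \in \MRo,\ n \text{ odd}}$ equals $\set{4,6,10,12,28,30,36,60}$. Hence $\MSo = \set{1,2,4,6,10,12,28,30,36,60} = \MS$, which is Conjecture~\ref{conj:main}(ii); part~(i) follows at once.
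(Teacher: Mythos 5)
Your proposal is correct, and it follows the same overall strategy as the paper --- tie membership in $\MSo$ to membership of odd numbers in $\MRo$ via the condition $\DDb_{n+1}=1$ (Theorems~\ref{thm:deriv} and \ref{thm:rad}), then invoke the hypothesis together with the set identity $\set{3,5,9,11,27,29,35,59}+1=\MS\setminus\set{1,2}$ --- but you execute the key step differently and, frankly, more completely. The paper's proof as written records only the direction ``$n\in\MRo$ odd $\Rightarrow n+1\in\MSo$,'' which by itself yields $\MS\setminus\set{1,2}\subseteq\MSo$, an inclusion already known unconditionally by computation; the direction actually needed for $\MSo\subseteq\MS$ is the converse, and in the paper it is left implicit in the identity $\DD_n=\DDb_{n+1}\cdot\rad(n+1)$ for odd $n\geq 3$, which rests on Lemma~\ref{lem:ddprop}(iii) (cited from Kellner--Sondow) inside the proof of Theorem~\ref{thm:rad}. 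You instead isolate the two-way equivalence ($n\in\MRo \Leftrightarrow n+1\in\MSo$ for odd $n\geq 3$) and prove the hard direction from scratch by a base-$p$ carry analysis: your digit formula $s_p(n)=s_p(k)-1+(p-1)a$ for $p^a\,\|\,m$, $k=m/p^a$, correctly gives $\rad(m)\mid\DD_n$ for even $m=n+1\geq 4$ (this re-proves Lemma~\ref{lem:ddprop}(ii) in the case needed, with the boundary case $m=2^a$, $a\geq 2$ handled properly), and the no-carry relation $s_p(m)=s_p(n)+1$ for $p\nmid m$ gives $\DD_n\mid\rad(m)$ from $\DDb_m=1$. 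Your parity bookkeeping is also right: $1$ is the only odd member of $\MSo$, and $m=2$ must indeed be treated separately since $1\notin\MRo$ (as $\DD_1=1\neq\rad(2)$). What your route buys is self-containedness and an explicit logical structure that makes clear why the restriction to \emph{odd} elements of $\MRS$ suffices; what the paper's route buys is brevity, at the cost of leaving the converse direction --- the one the theorem actually needs --- tacit in earlier proofs.
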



\section{Denominators and higher derivatives}
\label{sec:extend}

In this section, we extend the results to higher derivatives of $\BN_n(x)$.
Let $(n)_k$ denote the falling factorial such that $\binom{n}{k} = (n)_k / k!$.
We define the related denominators by
\[
  \DB_n^{(k)} := \denom(\BN_n^{(k)}(x)) \quad (k, n \geq 1).
\]

\begin{theorem} \label{thm:high}
Let $k, n \geq 1$. Then we have
\begin{equation} \label{eq:dbderiv}
  \DB_n^{(k)} = \frac{\DB_{n-k}}{\gcd(\DB_{n-k},(n)_k)}
  = \frac{\DDb_{n-k+1}}{\gcd(\DDb_{n-k+1},(n)_{k-1})}
  = \prod_{\substack{p \nmids (n)_k\\ s_p(n-k+1) \, \geq \, p}} p \quad (n \geq k).
\end{equation}
Otherwise, we have $\DB_n^{(k)} = 1$.
Moreover, the denominators $\DB_n^{(k)}$ have the property that
$p \nmid \DB_n^{(k)}$ for all primes $p \leq k$. In particular, we have
\[
  \DB_n^{(1)} = \frac{\DB_{n-1}}{\rad(n)} = \DDb_n \quad (n \geq 1).
\]
\end{theorem}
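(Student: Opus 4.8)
The plan is to reduce the $k$-th derivative to an ordinary Bernoulli polynomial by iterating the Appell relation, and then to track exactly what multiplication by the integer falling factorial $(n)_k$ does to the (squarefree) denominator $\DB_{n-k}$. First I would iterate \eqref{eq:bpderiv}: applying $\BN_m'(x)=m\,\BN_{m-1}(x)$ repeatedly gives $\BN_n^{(k)}(x)=(n)_k\,\BN_{n-k}(x)$ whenever $n\geq k$, while for $n<k$ the $k$-th derivative of the degree-$n$ polynomial $\BN_n(x)$ vanishes, so $\DB_n^{(k)}=\denom(0)=1$. This settles the ``otherwise'' case and leaves only the range $n\geq k$.

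For the first equality the essential ingredient is that $\DB_{n-k}$ is squarefree. Writing $\BN_{n-k}(x)=\sum_i a_i x^i$ and $D:=\DB_{n-k}=\lcm_i\denom(a_i)$, each prime $p\mid D$ divides $D$ exactly once, so some coefficient has $p$-adic valuation $v_p(a_i)=-1$ and none is smaller. Multiplying by the integer $(n)_k$ raises every $p$-adic valuation by $v_p((n)_k)$, so $p$ survives in $\denom((n)_k\,\BN_{n-k}(x))$ precisely when $p\mid D$ and $p\nmid(n)_k$. Collecting primes yields $\DB_n^{(k)}=D/\gcd(D,(n)_k)=\DB_{n-k}/\gcd(\DB_{n-k},(n)_k)$, the first displayed identity.

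Next I would feed in the decomposition $\DB_{n-k}=\DDb_{n-k+1}\cdot\rad(n-k+1)$ from \eqref{eq:dbrel}. These two factors are coprime, since by \eqref{eq:ddprod2} the factor $\DDb_{n-k+1}$ collects only primes $p\nmid(n-k+1)$, whereas $\rad(n-k+1)$ collects exactly the primes dividing $n-k+1$. Because $n-k+1$ is itself one of the factors of $(n)_k$, we have $\rad(n-k+1)\mid(n)_k$, so $\rad(n-k+1)$ is entirely cancelled by the gcd, leaving $\DDb_{n-k+1}/\gcd(\DDb_{n-k+1},(n)_k)$. Splitting $(n)_k=(n)_{k-1}\,(n-k+1)$ and using $\gcd(\DDb_{n-k+1},n-k+1)=1$ then replaces $(n)_k$ by $(n)_{k-1}$ inside the gcd, giving the second identity. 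Finally, expanding $\DDb_{n-k+1}$ via \eqref{eq:ddprod2} and merging the two coprimality conditions $p\nmid(n-k+1)$ and $p\nmid(n)_{k-1}$ into the single condition $p\nmid(n)_k$ produces the product formula, the third identity.

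For the ``moreover'' statement I would note that $(n)_k=n(n-1)\cdots(n-k+1)$ is a product of $k$ consecutive integers, so for every prime $p\leq k$ one of these factors is divisible by $p$; hence $p\mid(n)_k$ and $p$ is absent from the product formula, giving $p\nmid\DB_n^{(k)}$. The case $k=1$ is then immediate: with $(n)_1=n$ and $\DB_{n-1}=\DDb_n\cdot\rad(n)$, where $\DDb_n$ is coprime to $n$, one gets $\gcd(\DB_{n-1},n)=\rad(n)$, so $\DB_n^{(1)}=\DB_{n-1}/\rad(n)=\DDb_n$; equivalently, $(n)_0=1$ in the second identity forces $\DB_n^{(1)}=\DDb_n$ directly. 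I expect the main obstacle to be the bookkeeping in the third paragraph---checking the coprimalities and that the $\rad(n-k+1)$ factor cancels cleanly and reduces $(n)_k$ to $(n)_{k-1}$---rather than any hard estimate; the whole argument hinges on the squarefreeness of the $\DB_m$, which is what makes the denominator behave multiplicatively prime by prime.
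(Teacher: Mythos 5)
Your proposal is correct and follows essentially the same route as the paper's proof: iterate the Appell relation \eqref{eq:bpderiv} to get $\BN_n^{(k)}(x)=(n)_k\,\BN_{n-k}(x)$, use squarefreeness of $\DB_{n-k}$ together with the decomposition $\DB_{n-k}=\DDb_{n-k+1}\,\rad(n-k+1)$ from \eqref{eq:dbrel} to cancel $\rad(n-k+1)$ against $(n)_k=(n)_{k-1}(n-k+1)$ in the gcd, then expand via \eqref{eq:ddprod2} for the product formula and use $k!\mid(n)_k$ for the claim about primes $p\leq k$. The only differences are presentational (your explicit $p$-adic valuation justification of the first equality, and handling $n=k$ inside the general case rather than separately), not substantive.
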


Define the related sets
\[
  \MSo_k := \set{n \geq 1 : \BN^{(k)}_n(x) \in \ZZ[x]} \quad (k \geq 1),
\]
where $\MSo_1 = \MSo$.
Let $\MS_k$ denote the computable subsets of $\MSo_k$ with $\MS_1 = \MS$.

\begin{theorem} \label{thm:high2}
We have that all sets $\MSo_k$ are finite for $k \geq 1$.
Moreover, we have that
\[
  \MSo_1 \subset \MSo_2 \subset \MSo_3 \subset \cdots.
\]
\end{theorem}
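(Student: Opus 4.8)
The plan is to prove the two assertions by separate means: the nesting of the sets is formal, while the finiteness of each $\MSo_k$ rests on the growth of $\omega(\DDp_m)$ furnished by the Corollary. For the chain of inclusions I would use only that differentiation cannot create denominators: if $f\in\ZZ[x]$ then $f'\in\ZZ[x]$, since every coefficient of $f'$ is an integer multiple of one of $f$. As $\BN^{(k+1)}_n(x)=\frac{d}{dx}\BN^{(k)}_n(x)$, this gives $\DB_n^{(k+1)}\mid\DB_n^{(k)}$ for all $k,n\ge1$; in particular $\BN^{(k)}_n(x)\in\ZZ[x]$ implies $\BN^{(k+1)}_n(x)\in\ZZ[x]$, so $\MSo_k\subseteq\MSo_{k+1}$, which is the asserted chain.

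For the finiteness, fix $k\ge1$. The indices $n<k$ give $\BN^{(k)}_n(x)=0\in\ZZ[x]$ and so contribute only the finite set $\set{1,\dots,k-1}$; it therefore suffices to bound those $n\ge k$ that lie in $\MSo_k$. Put $m:=n-k+1\ge1$. By Theorem~\ref{thm:high} and its product formula \eqref{eq:dbderiv}, the membership $n\in\MSo_k$ is equivalent to $\DB_n^{(k)}=1$, that is, to the demand that every prime $p$ with $s_p(m)\ge p$ divide the falling factorial $(n)_k=\prod_{j=0}^{k-1}(m+j)$. I would then isolate the primes $p>\sqrt m$ with $s_p(m)\ge p$: by \eqref{eq:ddp} these are exactly the $\omega(\DDp_m)$ prime factors of $\DDp_m$, and each of them is coprime to $m$ (a prime $p>\sqrt m$ dividing $m$ would force $m=pm'$ with $m'<p$, whence $s_p(m)=m'<p$). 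Consequently such primes can be absorbed only by the $k-1$ factors $m+1,\dots,m+k-1$.

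The decisive step is a counting estimate. For $0\le j\le k-1$ we have $m+j\le m+k-1$, so once $m$ exceeds a bound depending only on $k$, the integer $m+j$ cannot be divisible by three distinct primes larger than $\sqrt m$, as their product would exceed $m^{3/2}>m+k-1$; hence $m+j$ has at most two prime divisors above $\sqrt m$. Summing over the relevant factors, at most $2(k-1)$ distinct primes $p>\sqrt m$ can divide $(n)_k$, so $n\in\MSo_k$ forces $\omega(\DDp_m)\le 2(k-1)$. This is exactly where the Corollary enters, and it is the only nonelementary ingredient: since $\omega(\DDp_m)\sim 2\sqrt m/\log m\to\infty$, the inequality $\omega(\DDp_m)\le 2(k-1)$ can hold only for $m$ below some threshold $M_k$. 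Hence $n=m+k-1\le M_k+k-1$, and $\MSo_k$ is finite. I expect the main obstacle to be precisely this matching: making the ``at most two large prime factors per term'' bound uniform across the consecutive integers $m,\dots,m+k-1$ and weighing it against the divergence of $\omega(\DDp_m)$; the rest of the argument is bookkeeping.
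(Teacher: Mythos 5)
Your proposal is correct and follows essentially the same route as the paper: the inclusion chain is the paper's identical one-line observation that differentiation preserves integrality, and your finiteness argument uses the same three ingredients --- the product formula of Theorem~\ref{thm:high}, the fact that the prime factors of $\DDp_m$ exceed $\sqrt{m}$ and are coprime to $m$ (the content of Lemma~\ref{lem:dddiv}), and the Bordell\`es--Luca--Moree--Shparlinski divergence of $\omega(\DDp_m)$. The only difference is bookkeeping: the paper multiplies the large primes in pairs to obtain the magnitude bound $\DDp_n > (n+k)_k$ (Lemma~\ref{lem:ddestim}) and then compares sizes via the gcd formula, whereas you pigeonhole the same primes into the $k-1$ factors $m+1,\ldots,m+k-1$ to obtain the count bound $\omega(\DDp_m) \le 2(k-1)$; the two formulations are equivalent.
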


Recall that
\[
  \MS_1 = \set{1, 2, 4, 6, 10, 12, 28, 30, 36, 60}.
\]
We use the notation, e.g., $\set{a-b} = \set{a,\ldots,b}$ for ranges.
We compute the sets
\begin{align*}
  \MS_2 = \{ &1-7,9-13,15,16,21,25,28-31,36,37,55,57,60,61,70,121,190 \}, \\
  \MS_3 = \{ &1-18, 20-22, 25, 26, 28-32, 35-38, 42, 50, 52, 55-58, 60-62, \\
  & 66, 70-72, 78, 80, 92, 110, 121, 122, 156, 176, 177, 190, 191, 210, 392 \}.
\end{align*}

\begin{conjecture}
We have $\MSo_2 = \MS_2$ and $\MSo_3 = \MS_3$.
\end{conjecture}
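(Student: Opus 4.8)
The conjecture asserts that the computable subsets $\MS_2,\MS_3$ already exhaust the finite sets $\MSo_2,\MSo_3$. The natural route is therefore twofold: first produce an \emph{effective} upper bound $N_k$ with $\MSo_k \subseteq [1,N_k]$, and then verify by direct computation that $\MSo_k \cap (\max\MS_k, N_k] = \emptyset$ for $k=2,3$. The second step is mechanical: by Theorem~\ref{thm:high} one has $\DB_n^{(k)} = \prod_{p \,\nmids\, (n)_k,\; s_p(n-k+1)\geq p} p$, so membership $n \in \MSo_k$ is decided (for $n \geq k$) by the finite test of whether every prime $p$ with $s_p(n-k+1)\geq p$ divides the falling factorial $(n)_k$; the finitely many $n<k$ lie trivially in $\MSo_k$ and fall below $N_k$.

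The first step is where the difficulty concentrates, and I would build it on the same mechanism that makes $\MSo_k$ finite. Fix $k$ and take $n \geq k$, writing $m := n-k+1$. If $n \in \MSo_k$, then every prime dividing $\DDp_m = \prod_{p>\sqrt{m},\; s_p(m)\geq p} p$ satisfies $s_p(m)\geq p$ and so must divide the product of $k$ consecutive integers $(n)_k = m(m+1)\cdots n$; since $\DDp_m$ is squarefree this gives $\DDp_m \mids (n)_k$ and hence $\omega(\DDp_m) \leq \#\set{p>\sqrt{m} : p \mids (n)_k}$. Each factor of $(n)_k$ lies below $2m$ once $m \geq k$, and a number below $2m$ cannot have three distinct prime divisors exceeding $\sqrt{m}$, as their product would exceed $m^{3/2} \geq 2m$ for $m \geq 4$; thus each factor contributes at most two such primes, yielding the clean bound $\omega(\DDp_m) \leq 2k$.

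To turn this into an effective $N_k$, one needs an effective lower bound for $\omega(\DDp_m)$ that eventually exceeds $2k$. This is exactly the obstacle, and I expect it to be the genuinely hard part. The asymptotic $\omega(\DDp_m) \sim 2\sqrt{m}/\log m$ of Bordell\`es, Luca, Moree, and Shparlinski proves $\omega(\DDp_m) \to \infty$, which already combines with the bound $\omega(\DDp_m)\leq 2k$ to force $\MSo_k$ finite; but their method is ineffective and supplies no computable threshold beyond which $\omega(\DDp_m) > 2k$. An effective replacement — in spirit the content of Conjecture~\ref{conj:kel}(i), that $\DDp_m > 1$ for all $m > 192$, suitably strengthened to a growing lower bound for $\omega(\DDp_m)$ — would deliver $N_k$, after which everything downstream (the factor-counting estimate and the finite verification against $\MS_2,\MS_3$ via the product formula of Theorem~\ref{thm:high}) is routine.
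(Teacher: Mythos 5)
The statement you are addressing is a \emph{conjecture} in the paper: the author does not prove it, and there is no proof to compare yours against. Your proposal, as you yourself concede in the final paragraph, is not a proof either; it is a reduction of the conjecture to an effective lower bound for $\omega(\DDp_m)$ that nobody currently knows how to establish. The only available growth result, the asymptotic $\omega(\DDp_m) \sim 2\sqrt{m}/\log m$ of Bordell{\`e}s, Luca, Moree, and Shparlinski, is ineffective --- the paper says explicitly that ``their methods do not lead to an explicit or computable bound $n_0$'' --- so no computable threshold $N_k$ can be extracted from it. Without such an $N_k$, your second (computational) step has no finite range to run over, and the computations that have actually been done (producing $\MS_2$ and $\MS_3$) establish only that $\MS_k \subseteq \MSo_k$ and that no further elements occur in the inspected window; they can never establish equality. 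So the genuine gap is exactly the one you name: an effective strengthening of Conjecture~\ref{conj:kel}, which remains open. This is precisely why the paper leaves the statement as a conjecture.

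The sound portion of your argument is still worth situating relative to the paper. Your counting bound --- if $n \in \MSo_k$ and $m = n-k+1$, then $\DDp_m \mids (n)_k$ by the product formula of Theorem~\ref{thm:high}, and since each of the $k$ factors of $(n)_k$ is below $2m$ (for $m \geq k$) and can carry at most two prime divisors exceeding $\sqrt{m}$ (for $m \geq 4$), one gets $\omega(\DDp_m) \leq 2k$ --- is correct, and it is essentially the contrapositive of the paper's own mechanism: Lemma~\ref{lem:ddestim} shows that $\omega(\DDp_n) \geq 2k$ forces $\DDp_n > (n+k)_k$, which via Theorem~\ref{thm:high} forces $\DB_{n+k-1}^{(k)} > 1$ in the proof of Theorem~\ref{thm:high2}, whereas you argue that $\DB_n^{(k)} = 1$ forces $\omega(\DDp_m) \leq 2k$. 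Fed the BLMS asymptotic, both routes deliver the same theorem, namely the finiteness of $\MSo_k$, and neither can go further. Your write-up should therefore present itself as a reproof of Theorem~\ref{thm:high2} together with a conditional reduction of the conjecture to an effective bound, not as a proof of the conjecture itself.
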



\section{Proofs of the theorems}
\label{sec:proof}

We give the proofs of the theorems in the order of their dependencies.
First, we need some key lemmas.

\begin{lemma} \label{lem:ddprop}
Let $n \geq 1$. We have the following properties.
\begin{enumerate}
\item We have that $\DD_n$ is odd if and only if $n = 2^e$ for some $e \geq 0$.
\item If $n+1$ is composite, then $\rad(n+1) \mid \DD_n$ and $\rad(n+1) \mid \DDb_n$.
\item If $n \geq 3$ is odd, then $\DD_n = \lcm(\DD_{n+1}, \rad(n+1))$.
\end{enumerate}
\end{lemma}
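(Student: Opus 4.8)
The plan is to prove the three parts of Lemma~\ref{lem:ddprop} by working directly with the product formula \eqref{eq:ddprod}, namely $\DD_n = \prod_{s_p(n) \geq p} p$, and the $p$-adic condition $s_p(n) \geq p$ that governs which primes divide $\DD_n$. All three statements reduce to understanding, for a fixed prime $p$, when $s_p(n) \geq p$ holds, and how this condition behaves as we pass between $n$ and $n+1$ and between divisibility regimes $p \mid n$ versus $p \nmid n$.

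For part (i), I would note that $2 \mid \DD_n$ if and only if $s_2(n) \geq 2$, i.e.\ the binary expansion of $n$ has at least two nonzero digits. Thus $\DD_n$ is odd exactly when $s_2(n) = 1$ (since $s_2(n) \geq 1$ for $n \geq 1$), which happens precisely when $n$ is a power of $2$. So $\DD_n$ odd $\iff n = 2^e$ for some $e \geq 0$. For part (ii), suppose $n+1$ is composite and let $p \mid n+1$. The goal is to show $p \mid \DD_n$ and $p \mid \DDb_n$ (the latter being the factor over primes $p \nmid n$). The key observation is that $p \mid n+1$ forces $n \equiv -1 \pmod p$, so the least significant base-$p$ digit of $n$ is $p-1$; writing $n+1 = p \cdot m$ with $m \geq 2$ (composite), I would examine the base-$p$ expansion of $n = pm - 1$ and argue that $s_p(n) \geq p$, using that $m \geq 2$ contributes enough to the digit sum. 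Since $p \mid n+1$ implies $p \nmid n$, establishing $s_p(n) \geq p$ simultaneously gives $p \mid \DDb_n$, and hence $\rad(n+1) \mid \DDb_n$ by taking the product over all such $p$; then $\rad(n+1) \mid \DD_n$ follows from the decomposition \eqref{eq:decomp2}.

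For part (iii), with $n \geq 3$ odd, I want $\DD_n = \lcm(\DD_{n+1}, \rad(n+1))$. Since all these denominators are squarefree, it suffices to check, prime by prime, that $p \mid \DD_n \iff p \mid \DD_{n+1}$ or $p \mid \rad(n+1)$. The heart of the argument is a digit-sum comparison between $n$ and $n+1 = 2\cdot\frac{n+1}{2}$: for an odd $n$, adding $1$ produces a carry in base $p$ only when $p \mid n+1$, and I would track how $s_p$ changes across this increment. When $p \nmid n+1$, no carry occurs at the units digit and the relevant digit-sum condition for $n$ versus $n+1$ can be matched up; when $p \mid n+1$, the prime contributes to $\rad(n+1)$, covering that case directly. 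This mirrors the classical von Staudt--Clausen style relation \eqref{eq:dbrel} but shifted appropriately for odd index.

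The main obstacle I anticipate is part (ii): controlling $s_p(n)$ from below when $p \mid n+1$ and $m = (n+1)/p \geq 2$. Showing $s_p(n) \geq p$ requires a careful analysis of the base-$p$ digits of $pm - 1$, which equal $(p-1)$ in the units place plus the digits of $m-1$ shifted up, so $s_p(n) = (p-1) + s_p(m-1)$; the claim then reduces to $s_p(m-1) \geq 1$, which holds as long as $m \geq 2$, giving $s_p(n) \geq p$ exactly. I would need to handle the boundary carefully to ensure compositeness of $n+1$ (so that $m \geq 2$ rather than $m = 1$) is genuinely used, since for prime $n+1 = p$ one has $m = 1$, $s_p(n) = p-1 < p$, and the prime correctly drops out. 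This sharp $s_p(n) = (p-1) + s_p(m-1)$ identity is the technical crux on which both parts (ii) and (iii) ultimately rest.
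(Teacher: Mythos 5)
Your parts (i) and (ii) are sound. For (i), the equivalence $\DD_n$ odd $\iff s_2(n)=1 \iff n=2^e$ is immediate from \eqref{eq:ddprod}. For (ii), the identity $s_p(n) = (p-1) + s_p(m-1)$ for $n+1 = pm$, together with $m \geq 2$ forced by compositeness, is exactly what is needed; it also correctly explains why a prime $n+1 = p$ drops out, and $p \mid n+1 \Rightarrow p \nmid n$ puts these primes into $\DDb_n$ as required. Note that the paper does not prove this lemma at all --- it simply cites three earlier papers --- so your self-contained digit-sum argument is a genuinely different and more informative route for these two parts.

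Part (iii), however, has a genuine gap, and it sits precisely where you wrote that the digit-sum conditions for $n$ and $n+1$ ``can be matched up.'' In the case $p \nmid n+1$ (where $p$ is necessarily odd, since $2 \mid n+1$ for odd $n$), no carry occurs, so $s_p(n+1) = s_p(n) + 1$. This gives one direction cheaply: $s_p(n) \geq p$ implies $s_p(n+1) \geq p$, hence $\DD_n \mid \lcm(\DD_{n+1}, \rad(n+1))$. But the reverse direction --- needed for $\lcm(\DD_{n+1}, \rad(n+1)) \mid \DD_n$ --- does not follow from the carry analysis alone: $s_p(n+1) \geq p$ only yields $s_p(n) \geq p-1$, and the boundary case $s_p(n) = p-1$, $s_p(n+1) = p$ is a real phenomenon, not an artifact. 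Take $n = 4$, $p = 3$: then $s_3(4) = 2$, $s_3(5) = 3$, $3 \nmid 5$, and indeed $\DD_4 = 1$ while $3 \mid \DD_5 = 6$, so the identity of (iii) fails for this even $n$ exactly through this mechanism. The missing idea is a parity argument that actually uses the hypothesis that $n$ is odd, which your sketch never invokes in this case: for odd $p$, every power $p^i$ is odd, so $s_p(n) \equiv n \pmod 2$; hence $n$ odd forces $s_p(n)$ to be odd, while $p-1$ is even, so $s_p(n) = p-1$ is impossible and the bound $s_p(n) \geq p-1$ upgrades to $s_p(n) \geq p$. With this inserted, your case analysis closes: primes $p \mid n+1$ are handled by part (ii), since $n+1 \geq 4$ is even and hence composite, and primes $p \nmid n+1$ by the carry-free digit count plus parity.
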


\begin{proof}
(i).~See \cite[Theorem~1]{Kellner&Sondow:2017}.
(ii).~See \cite[Theorem~1]{Kellner:2017} and \cite[Corollary~2]{Kellner&Sondow:2018},
respectively.
(iii).~See \cite[Theorem~3.2]{Kellner&Sondow:2021}.
\end{proof}

\begin{lemma} \label{lem:dddiv}
For $n \geq 1$, we have $\DDp_n \mid \DDb_n$.
\end{lemma}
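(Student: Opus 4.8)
The plan is to reduce the divisibility $\DDp_n \mid \DDb_n$ to a statement about single primes. Since both sides are squarefree products of distinct primes by their definitions in \eqref{eq:ddp} and \eqref{eq:ddprod2}, it suffices to show that every prime $p$ occurring in the product $\DDp_n$ also occurs in the product $\DDb_n$. Both products already impose the common condition $s_p(n) \geq p$; the product $\DDp_n$ additionally requires $p > \sqrt{n}$, whereas $\DDb_n$ instead requires $p \nmids n$. Hence the entire lemma comes down to the implication that, for a fixed prime $p$,
\[
  p > \sqrt{n} \ \text{and}\ s_p(n) \geq p \quad \Longrightarrow \quad p \nmids n.
\]

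To prove this implication I would expand $n$ in base $p$. From $p > \sqrt{n}$ we get $n < p^2$, so $n$ has at most two base-$p$ digits; write $n = a_1 p + a_0$ with $0 \leq a_0, a_1 < p$, so that $s_p(n) = a_0 + a_1$. Now suppose for contradiction that $p \mids n$. Then the least significant digit vanishes, $a_0 = 0$, which forces $s_p(n) = a_1 < p$, contradicting the hypothesis $s_p(n) \geq p$. Therefore $p \nmids n$, and since the condition $s_p(n) \geq p$ persists, the prime $p$ is counted in the product defining $\DDb_n$.

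Combining these observations, every prime factor of $\DDp_n$ is a prime factor of $\DDb_n$; as $\DDb_n$ is squarefree, this yields $\DDp_n \mid \DDb_n$. There is no serious obstacle here: the only point requiring care is the base-$p$ expansion, where one must genuinely use $p > \sqrt{n}$ (rather than the weaker bound $p \leq n$) to guarantee exactly two digits, so that $s_p(n) = a_0 + a_1$ and the assumption $p \mid n$ collapses the digit sum below $p$.
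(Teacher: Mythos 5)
Your proof is correct and follows essentially the same route as the paper: both reduce the divisibility to a per-prime implication and use the two-digit base-$p$ expansion $n = a_1 p + a_0$ (valid since $p > \sqrt{n}$ gives $n < p^2$) to conclude from $s_p(n) = a_0 + a_1 \geq p$ that $a_0 \neq 0$, hence $p \nmids n$ and $p \mid \DDb_n$. The only cosmetic difference is that the paper phrases the digit argument directly while you argue by contradiction.
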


\begin{proof}
Let $n \geq 1$, and assume that $\DDp_n > 1$. Otherwise, we are done.
If a prime $p \mid \DDp_n$, then by \eqref{eq:ddp} we have
$p > \sqrt{n}$ and $s_p(n) \geq p$. Thus, we have $p^2 > n$ implying
the $p$-adic expansion $n = a_0 + a_1 p$.
It then follows from $a_0 + a_1 = s_p(n) \geq p$ that $a_0 \neq 0$
and $p \nmid n$. Finally, this shows by \eqref{eq:ddprod2} that $p \mid \DDb_n$.
\end{proof}

\begin{lemma} \label{lem:ddestim}
For $k \geq 1$, there exists a number $n_k$ such that
$\DDp_n > (n+k)_k$ for every $n > n_k$.
\end{lemma}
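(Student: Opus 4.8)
The plan is to bound $\DDp_n$ from below by combining two facts: every prime occurring in the product \eqref{eq:ddp} exceeds $\sqrt{n}$, and the number of such primes tends to infinity. The first is immediate from the definition of $\DDp_n$; the second is exactly the content of the proven asymptotic \eqref{eq:ddpo2} with $\kappa=2$ (the Corollary following Theorem~\ref{thm:main}, due to \cite{BLMS:2018}), which forces $\omega(\DDp_n)\to\infty$ as $n\to\infty$ since $\sqrt{n}/\log n\to\infty$. Thus $\DDp_n$ grows faster than any fixed power of $n$, while $(n+k)_k$ is of polynomial size $n^k$, and the inequality will follow for all large $n$.

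First I would record the elementary lower bound. Since $\DDp_n$ is a product of $\omega(\DDp_n)$ distinct primes, each strictly larger than $\sqrt{n}$, whenever $\omega(\DDp_n)\geq 1$ we have
\[
  \DDp_n > (\sqrt{n})^{\omega(\DDp_n)} = n^{\omega(\DDp_n)/2}.
\]
On the other side, $(n+k)_k = (n+k)(n+k-1)\cdots(n+1)$ is a product of $k$ factors, each at most $n+k$, so
\[
  (n+k)_k \leq (n+k)^k \leq (2n)^k = 2^k n^k \quad (n \geq k).
\]

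Next I would match the two estimates. By $\omega(\DDp_n)\to\infty$ there exists $N_1$ with $\omega(\DDp_n)\geq 2k+2$ for all $n>N_1$; for such $n$ the lower bound yields $\DDp_n > n^{k+1}$. Setting $n_k := \max(N_1, 2^k)$ and taking $n>n_k$ (so in particular $n>2^k\geq k$), I combine the bounds into the chain
\[
  \DDp_n > n^{k+1} = n\cdot n^k > 2^k n^k \geq (n+k)^k \geq (n+k)_k,
\]
which is the asserted inequality, the penultimate step using $n>2^k\geq k$.

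The only nonelementary input is the divergence $\omega(\DDp_n)\to\infty$, and this is precisely where the analytic result of Bordell\`es et al.\ enters; everything else is a direct size comparison. I would emphasize that the full strength of \eqref{eq:ddpo2} is not required, only that $\omega(\DDp_n)$ eventually exceeds the fixed constant $2k+2$. However, since even the conjectural bound $\DDp_n>1$ for $n>192$ (Conjecture~\ref{conj:kel}(i)) remains open, there is no elementary route to so much as $\omega(\DDp_n)\geq 1$, so the asymptotic is genuinely unavoidable. Consequently the threshold $n_k$ is ineffective, inheriting the non-computable $n_0$ of \cite{BLMS:2018}; this ineffectivity, rather than any computational difficulty, is the real obstacle.
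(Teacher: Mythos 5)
Your proof is correct and takes essentially the same route as the paper: both invoke the Bordell\`es--Luca--Moree--Shparlinski result (\eqref{eq:ddpo2} with $\kappa=2$) to force $\omega(\DDp_n)$ above a fixed multiple of $k$ for all large $n$, and then exploit that every prime dividing $\DDp_n$ exceeds $\sqrt{n}$ to outgrow the polynomial $(n+k)_k$. The only difference is bookkeeping: the paper uses $2k$ such primes, paired so that each pair product exceeds $n+1$ and the pair products increase, giving $\DDp_n > (n+1)\cdots(n+k)$ directly, while you use $2k+2$ primes and absorb the slack in $(n+k)^k \leq (2n)^k$ by additionally requiring $n > 2^k$.
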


\begin{proof}
Let $k \geq 1$. Using the result of Bordell{\`e}s et al.~\cite[Corollary~1.6]{BLMS:2018},
we have that \eqref{eq:ddpo2} holds with $\kappa = 2$.
Thus, there exists a number $n_k$ such that $\omega(\DDp_n) \geq 2k$ for $n > n_k$.
Now consider any set of $2k$ prime divisors $p_1 < p_2 < \cdots < p_{2k}$ of $\DDp_n$.
Since $p \mid \DDp_n$ implies $p > \sqrt{n}$, we infer that
$n+1 < p_1 p_2 < p_3 p_4 < \cdots < p_{2k-1} p_{2k}$.
Consequently, we get $\DDp_n > (n+1)\cdots(n+k) = (n+k)_k$ for every $n > n_k$.
\end{proof}

\begin{proof}[Proof of Theorem~\ref{thm:deriv}]
Let $n \geq 1$. If $\BN'_n(x) \in \ZZ[x]$, then we have by \eqref{eq:bpderiv} that
\[
  \denom(\BN'_n(x)) = \denom(n \, \BN_{n-1}(x)) = 1.
\]
As a consequence, we have that $\DB_{n-1} \mid n$. Applying Theorem~\ref{thm:triple}
and \eqref{eq:dbrel}, it follows that
\begin{equation} \label{eq:ddcond}
  \DDb_n \,\cdot\, \rad(n) \mid n.
\end{equation}
Since $\DDb_n$ is coprime to $n$, we conclude that $\DDb_n = 1$.
In the other direction, condition~\eqref{eq:ddcond} is satisfied
only if $\DDb_n = 1$. By \eqref{eq:dbrel}, this is equivalent to
$\DB_{n-1} = \rad(n)$. From Lemma~\ref{lem:ddprop}(ii), it further follows that
$n+1$ must be prime. Otherwise, we would have that $\rad(n+1) \mid \DDb_n$.
This completes the proof.
\end{proof}

\begin{proof}[Proof of Theorem~\ref{thm:main}]
First, we assume Conjecture~\ref{conj:kel}(i). Let $n > 192$. We then have that
$\DDp_n > 1$. By Lemma~\ref{lem:dddiv}, this property transfers to $\DDb_n > 1$.
From Theorem~\ref{thm:deriv}, it follows that $\BN'_n(x) \notin \ZZ[x]$ for $n > 192$.
We have to check the remaining cases $1 \leq n \leq 192$.
By Theorem~\ref{thm:deriv}, it then suffices to check the numbers $\DDb_n$
when $n+1$ is prime. Finally, this confirms that $\BN'_n(x) \in \ZZ[x]$
if and only if $n \in \MS$, implying Conjecture~\ref{conj:main}(ii).
Secondly, we now assume Conjecture~\ref{conj:kel}(ii). It follows from \eqref{eq:ddpo2}
that there exists a number $n_0$ such that $\DDp_n > 1$ for $n > n_0$.
Similar to the first part above, this implies that
$\BN'_n(x) \notin \ZZ[x]$ for $n > n_0$.
Hence, the set $\MSo$ is finite, which is Conjecture~\ref{conj:main}(i).
\end{proof}

\begin{proof}[Proof of Theorem~\ref{thm:main2}]
We have to show two parts.
(i).~By Theorem~\ref{thm:deriv}, we have that $n \in \MSo$ implies that $n+1$ is prime.
(ii).~This follows from computations of the graph \cite[Figure~B1]{Kellner:2017}
of $\omega(\DDp_n)$ in the range below $10^7$.
\end{proof}

\begin{proof}[Proof of Theorem~\ref{thm:rad}]
Let $n \in \MRo$. Assume that $\DD_n = \rad(n+1)$, where $p = n+1$ is prime.
Then $\DD_n = p$ contradicts \eqref{eq:ddprod}, since $s_p(n) = n < p$.
Therefore, the number $n+1$ is composite.
If $n$ is even, then $\DD_n = \rad(n+1)$ implies that $\DD_n$ is odd.
From Lemma~\ref{lem:ddprop}(i), it follows that $n = 2^e$ for some $e \geq 1$.
Now, we assume that $n \geq 3$ is odd, neglecting the case $\DD_1=1$.
Using Lemma~\ref{lem:ddprop}(iii), we infer that
\[
  \DD_n = \lcm(\DD_{n+1}, \rad(n+1))
  = \DDb_{n+1} \lcm(\DDt_{n+1}, \rad(n+1)) = \rad(n+1),
\]
so $\DDb_{n+1} = 1$ as desired. It remains to show that $\MRo$ is finite.
Applying Lemma~\ref{lem:ddestim} with $k=1$ shows that there exists $n_1$
such that $\DDp_n > n+1$ for $n > n_1$. Since $\DD_n \geq \DDp_n$ by \eqref{eq:decomp},
it follows that $\MRo$ is finite. This completes the proof.
\end{proof}

\begin{proof}[Proof of Theorem~\ref{thm:rad2}]
Let $\MRS' = \set{3,5,9,11,27,29,35,59}$ be the reduced set of $\MRS$ consisting
only of odd numbers. From Theorems~\ref{thm:deriv} and~\ref{thm:rad}, we derive
that $n \in \MRS'$ implies that $\DDb_{n+1} = 1$ and so $n+1 \in \MS$.
Since $\MRS'+1 = \MS \setminus \set{1,2}$, the result follows.
\end{proof}

\begin{proof}[Proof of Theorem~\ref{thm:high}]
First assume that $1 \leq n < k$. By \eqref{eq:bpdef}, the Bernoulli polynomial
$\BN_n(x)$ is a monic polynomial of degree $n$.
Thus, the $k$th derivative of $\BN_n(x)$ vanishes, yielding $\DB_n^{(k)} = 1$.
If $n = k \geq 1$, then $\BN^{(k)}_n(x) = n!$ and so $\DB_n^{(k)} = 1$.
As $\DB_0 = \DDb_1 = 1$, this case coincides with \eqref{eq:dbderiv}.
Now, let $n > k \geq 1$. From \eqref{eq:bpderiv}, it follows that
\[
  \DB_n^{(k)} = \denom(\BN^{(k)}_n(x)) = \denom((n)_k \, \BN_{n-k}(x))
  = \frac{\DB_{n-k}}{\gcd(\DB_{n-k},(n)_k)}.
\]
Using \eqref{eq:dbrel}, we have $\DB_{n-k} = \DDb_{n-k+1} \rad(n-k+1)$.
Recall that $\DB_{n-k}$ is squarefree.
Together with $(n)_k = (n)_{k-1} (n-k+1)$, we infer that
\[
  \gcd(\DB_{n-k},(n)_k) = \gcd(\DDb_{n-k+1},(n)_{k-1}) \rad(n-k+1).
\]
As a consequence, we obtain
\begin{equation} \label{eq:dbderiv2}
  \DB_n^{(k)} = \frac{\DDb_{n-k+1}}{\gcd(\DDb_{n-k+1},(n)_{k-1})}.
\end{equation}
Note that $p \mid (n)_{k-1}$ implies $p \nmid \DB_n^{(k)}$.
By \eqref{eq:ddprod2}, we have
\begin{equation} \label{eq:ddprod3}
  \DDb_{n-k+1} = \prod_{\substack{p \nmids n-k+1\\ s_p(n-k+1) \, \geq \, p}} p.
\end{equation}
Putting \eqref{eq:dbderiv2} and \eqref{eq:ddprod3} together, we get
\[
  \DB_n^{(k)} = \prod_{\substack{p \nmids (n)_k\\ s_p(n-k+1) \, \geq \, p}} p.
\]
From $k! \mid (n)_k$ and $p \nmid (n)_k$, we derive that
$p \nmid \DB_n^{(k)}$ for $p \leq k$.
At the end, we consider the case $k=1$ for $n \geq 1$.
Recall that $\DB_1^{(1)} = \DDb_1 = 1$.
For $n > 1$, we have $\DB_n^{(1)} = \DDb_n$ by \eqref{eq:dbderiv2}.
From \eqref{eq:dbrel} and $\DB_0 = 1$,
it finally follows that $\DB_n^{(1)} = \DB_{n-1} / \rad(n) = \DDb_n$
for all $n \geq 1$.
\end{proof}

\begin{proof}[Proof of Theorem~\ref{thm:high2}]
Let $k \geq 1$. Combining Lemmas~\ref{lem:dddiv} and \ref{lem:ddestim},
we see that there exists a number $n_k$ such that
\[
  \DDb_n \geq \DDp_n > (n+k)_k \quad (n > n_k).
\]
By Theorem~\ref{thm:high} and shifting the index $n$ to $n+k-1$ in \eqref{eq:dbderiv},
we obtain
\[
  \DB_{n+k-1}^{(k)} = \frac{\DDb_{n}}{\gcd(\DDb_{n},(n+k-1)_{k-1})} \quad (n \geq 1).
\]
Since $\gcd(\DDb_{n},(n+k-1)_{k-1}) \leq (n+k-1)_{k-1} < (n+k)_k$, we then deduce that
\[
  \DB_{n+k-1}^{(k)} > \frac{\DDb_{n}}{(n+k)_k} > 1 \quad (n > n_k),
\]
showing that $\MSo_k$ is finite.
As $\BN^{(k)}_n(x) \in \ZZ[x]$ also implies that $\BN^{(k+1)}_n(x) \in \ZZ[x]$,
we infer that $\MSo_k \subset \MSo_{k+1}$.
Hence, this yields $\MSo_1 \subset \MSo_2 \subset \MSo_3 \subset \cdots$,
completing the proof.
\end{proof}

\section{Acknowledgments}

The author would like to thank the anonymous referee for the careful reading
and for several valuable comments that improved the quality of the paper.


\bibliographystyle{amsplain}

\end{document}